\date{}
\newcolumntype{R}[1]{>{\raggedleft\arraybackslash}m{#1}} 
\newcolumntype{L}[1]{>{\raggedright\arraybackslash}m{#1}} 
\newcolumntype{C}[1]{>{\centering\arraybackslash}m{#1}}
\newcommand{\Rev}[0]{\textup{Rev}}
\newcommand{\Id}[0]{\textup{Id}}
\newcommand{\Iso}[0]{\textup{Iso}}
\newcommand{\Coul}[0]{\textup{Coul}}
\newcommand{\Bla}[0]{\textup{Bla}}
\newcommand{\Ble}[0]{\textup{Ble}}
\renewcommand{\O}[0]{\textup{O}}
\newcommand{\R}[0]{\textup{R}}
\newcommand{\V}[0]{\textup{V}}
\newcommand{\J}[0]{\textup{J}}
\newcommand{\Faces}[0]{\textup{Faces}}
\newcommand{\Aut}[0]{\textup{Aut}}
\newcommand{\Ineq}[0]{\textup{Ineq}}
\newcommand{\fib}[1]{#1\textup{fib}}
\newcommand{\tor}[1]{#1\textup{tor}}
\newcommand{\ArGeom}[0]{\textup{ArGeom}}
\newcommand{\SomGeom}[0]{\textup{SomGeom}}
\newcommand{\CentGeom}[0]{\textup{CentGeom}}
\newcommand{\ArMat}[0]{\textup{ArMat}}
\newcommand{\SomMat}[0]{\textup{SomMat}}
\newcommand{\CentMat}[0]{\textup{CentMat}}
\newcommand{\Ad}[0]{\textup{Ad}}
\theoremstyle{plain}
    \newtheorem{Theo}{Théorème}[section]
    \newtheorem{Cor}[Theo]{Corollaire}
    \newtheorem{Prop}[Theo]{Proposition}
    \newtheorem{Lem}[Theo]{Lemme}
\theoremstyle{definition}
    \newtheorem{Def}[Theo]{Définition}
    \newtheorem*{Ex}{Ex}
\theoremstyle{remark}
    \newtheorem*{Rem}{Remarque}
    \newtheorem*{Rems}{Remarques}
\theoremstyle{remark}
    \newtheorem*{Not}{Notations}
\title{Le calcul du nombre d'états inéquivalents du Rubik's Revenge}
\author{Victor Le Guilloux}
\begin{document}

\maketitle

\begin{abstract}
    Après avoir traduit le problème de résolution du Rubik's Revenge en termes d'actions de groupes, on utilise le résultat de structure des transformations licites de \cite{La} pour dénombrer les états du Rubik's Revenge à transformations licites et indiscernables près, en présence et en l'absence de contraintes mécaniques. On retrouve également par une nouvelle méthode le calcul réalisé dans \cite{BonzioRev} de la probabilité de pouvoir résoudre le Rubik's Revenge en l'ayant monté au hasard, encore une fois en présence et en l'absence de contraintes mécaniques.
\end{abstract}

\section*{Introduction}

Le Rubik's Revenge est un casse-tête similaire au célèbre Rubik's Cube : il consiste en un cube dont les faces sont divisées en $4\times4$ cellules carrées que l'on doit (presque, en raison de l'existence de pièces dont les coloriages les rend indiscernables) remettre dans leur position initiale après les avoir mélangées. Pour les mélanger et les remettre dans leur état initial on ne considère comme licite que l'utilisation des rotations des tranches du casse-tête. Ces transformations engendrent un groupe que nous noterons $\mathcal{L}$. Ce groupe a été calculé dans \cite{Colmez} pour le Rubik's Cube, dans \cite{La} pour le Rubik's Revenge, et pour des cubes de taille arbitraire dans \cite{Bonzio}.

Le Rubik's Revenge possède en outre une structure mécanique interne suffisamment complexe pour ne pas permettre de le monter dans toutes les configurations imaginables. La prise en compte ou non de cette mécanique conduit naturellement à des résultats différents lorsqu'on étudie le casse-tête.\\

On peut alors légitimement se poser les questions suivantes : \textbf{(A)} combien y a-t-il de configurations du cube, à transformations licites et des pièces indiscernables près si on ne prend pas en compte les contraintes imposées par la mécanique du Rubik's Revenge ? Et \textbf{(B)} en démontant et remontant le casse-tête au hasard sans s'intéresser aux contraintes mécaniques, quelles sont les chances de l'avoir remonté dans une configuration où on peut le résoudre ? \textbf{(A')} Qu'en est-il de la qestion \textbf{(A)} si on prend en compte ces contraintes ? \textbf{(B')} De même, de quelle façon le résultat de la question \textbf{(B)} est-il modifié si on s'intéresse aux contraintes ?\\

Les questions \textbf{(B)} et \textbf{(B')} ont été résolues dans \cite{BonzioRev} mais on va utiliser ici une approche différente pour les résoudre.\\

Dans le cas du Rubik's Cube, la première observation à faire est qu'il n'y a pas de contraintes mécaniques particulières en raison de la confection des pièces. Les questions \textbf{(A)} et \textbf{(A')} sont donc équivalentes, de même que les questions \textbf{(B)} et \textbf{(B')}. De plus, comme il n'existe pas de pièces indiscernables, les deux questions \textbf{(A)} et \textbf{(B)} sont en réalité équivalentes et on peut les traiter comme suit. Si on dispose d'un couple d'objets isomorphes d'une catégorie $\mathcal{C}$, on peut lui associer un \emph{torseur} (l'ensemble des isomorphismes entre ces deux objets) ; dans la suite $\mathcal{C}$ sera la catégorie des \emph{fibrés principaux discrets} (introduits dans la section \ref{I}). On introduit des objets $\ArGeom,\ArMat,\SomGeom,\SomMat$ de $\mathcal{C}$ (désignant respectivement les arêtes géométriques et matérielles et les sommets géométriques et matériels), avec $\ArGeom\cong \ArMat$ et $\SomGeom\cong \SomMat$, et les torseurs associés sont donc les ensembles $\Iso(\ArGeom,\ArMat)$ et $\Iso(\SomGeom,\SomMat)$. On introduit aussi un groupe $\mathcal{L}\subset\Aut(\ArGeom)\times\Aut(\SomGeom)$, et le cardinal recherché dans \textbf{(A)} est alors celui de :
\begin{align*}
    (\Iso(\ArGeom,\ArMat)\times\Iso(\SomGeom,\SomMat))/\mathcal{L},
\end{align*}
qui est également celui de $(\Aut(\ArGeom)\times\Aut(\SomGeom))/\mathcal{L}$. Ce cardinal est calculé dans \cite{Colmez} (Théorème 1). D'autre part on répond à la question \textbf{(B)} en montrant que la probabilité recherchée est donnée par l'inverse de ce cardinal (l'utilisation de l'expression "au hasard" sous-entend que l'on considère la loi uniforme sur $\Iso(\ArGeom,\ArMat)\times\Iso(\SomGeom,\SomMat)$).\\

La situation est plus compliquée dans le cas du Rubik's Revenge. En plus de devoir remplacer le produit  $\Iso(\ArGeom,\ArMat)\times\Iso(\SomGeom,\SomMat)$ par le produit :
\begin{align*}
    \Iso(\ArGeom,\ArMat)\times\Iso(\SomGeom,\SomMat)\times\Iso(\CentGeom,\CentMat)
\end{align*}
(où $\CentGeom,\CentMat$ sont des objets de $\mathcal{C}$ isomorphes représentant respectivement les centres géométriques et les centres matériels, pour tenir compte des pièces centrales), certaines des pièces ont un coloriage qui les rend indiscernables (les 24 arêtes sont regroupées en 12 paires d'arêtes indiscernables et les 24 centres sont regroupés en 6 ensembles de 4 centres indiscernables), et la confection des pièces rend certaines configurations impossibles à réaliser sans casser le Rubik's Revenge. Pour prendre en compte les pièces indiscernables, on les numérote, mais on doit maintenant prendre en considération les transformations qui peuvent permuter ces pièces indiscernables. Pour répondre aux questions \textbf{(A)} et \textbf{(B)} on est alors amené à modifier le cadre de travail en remplaçant les torseurs par des \emph{bitorseurs} en ajoutant une action à gauche de $\Aut(\ArMat)\times\Aut(\SomMat)\times\Aut(\CentMat)$. Pour pouvoir résoudre les questions \textbf{(A')} et \textbf{(B')} on doit en plus se restreindre à l'action à droite et à gauche de sous-groupes des produits d'automorphismes des structures au départ et à l'arrivée. 

Dans les section \ref{II} à \ref{IV}, nous introduisons les objets dont il est question (pièces géométriques et matérielles) et l'ensemble des coloriages $\Rev_{marq}$ (section \ref{II}), les sous-groupes $\mathcal{I}\subset\mathcal{T}_{mat}$ correspondant aux transformations des pièces indiscernables (section \ref{III}) et $\mathcal{L}\subset\mathcal{T}_{geom}$ des transformations licites (section \ref{IV}). Dans la section \ref{V}, nous reformulons en ces termes et nous résolvons les questions \textbf{(A)} et \textbf{(B)} ; plus précisément on résout \textbf{(A)} en calculant le cardinal du double quotient $\mathcal{I}\backslash\Rev_{marq}/\mathcal{L}$, puis si $x_0$ désigne l'état initial du Rubik's Revenge, on répond à \textbf{(B)} en calculant $\frac{\#\mathcal{I}\cdot x_0\cdot\mathcal{L}}{\#\Rev_{marq}}$, ce qui donne la probabilité recherchée. Enfin, dans la section \ref{VI} nous reformulons les contraintes mécaniques en termes d'actions de groupes et nous répondons à la question \textbf{(A')} en calculant le cardinal d'un autre double quotient, $\mathcal{I}'\backslash\Rev_{meca}/\mathcal{L}$, où $\mathcal{I}'$ est un sous-groupe de $\mathcal{I}$ et $\Rev_{meca}$ est un sous-ensemble de $\Rev_{marq}$ contenant tous deux les informations des contraintes mécaniques, et on répond à la question \textbf{(B')} en calculant $\frac{\#\mathcal{I'}\cdot x_0\cdot\mathcal{L}}{\#\Rev_{meca}}$.

\section{Préliminaires algébriques}\label{I}

Si $\mathcal{C}$ est une catégorie et $A,B$ sont deux objets de $\mathcal{C}$ isomorphes, alors les groupes $\Aut(A)$ et $\Aut(B)$ agissent respectivement à droite et à gauche sur l'ensemble $\Iso(A,B)$ de façon libre et transitive, et les actions commutent. En faisant agir un seul de ces groupes on obtient un \emph{torseur}, et en faisant agir les deux groupes on obtient un \emph{bitorseur}. Dans cette section nous allons détailler les propriétés des torseurs et nous construisons une catégorie, celle des \emph{fibrés principaux discrets}, une structure qui sera adaptée à l'étude du Rubik's Revenge.

\begin{Not}
    Etant donné un ensemble $E$ doté d'une action à gauche d'un groupe $G$ et d'une action à droite d'un groupe $G'$, on notera $G\backslash E = \{G\cdot e|e\in E\}$. De façon analogue on notera $E/G'$ les orbites de l'action à droite.
    
    Si $B$ est un ensemble, on notera $G\wr\mathfrak{S}(B)$ le produit en couronne de $G$ par $\mathfrak{S}(B)$.
\end{Not}

\subsection{Torseurs et bitorseurs}

\begin{Def}
    Soient $G$ un groupe et $E$ un ensemble non vide. On dit que $E$ est un \emph{$G$-torseur} à gauche (resp. à droite) si $G$ agit librement et transitivement à gauche (resp. à droite) sur $E$.
    
    Si $G'$ est un groupe, on dit que $E$ est un \emph{$(G,G')$-bitorseur} si $E$ est un $G$-torseur à gauche et un $G'$-torseur à droite et si les actions de $G$ et $G'$ commutent.
\end{Def}

\begin{Def}
    Soient $G$ un groupe et $E,F$ des $G$-torseurs à gauche. Un \emph{morphisme de $G$-torseurs} est une application $\varphi : E \longrightarrow F$ telle que pour tous $g\in G, e \in E$, $\varphi(g\cdot e) = g\cdot\varphi(e)$.
    
    Si $\varphi$ est une bijection on dit que c'est un \emph{isomorphisme de $G$-torseurs}. On note $\Iso_{\tor{G-}}(E,F)$ l'ensemble des isomorphismes de $G$-torseurs $E\longrightarrow F$.
    
    Un isomorphisme de $G$-torseurs $E\longrightarrow E$ est appelé un \emph{automorphisme de $G$-torseur}. On note $\Aut_{\tor{G-}}(E)$ le groupe des automorphismes de $G$-torseur de $E$.
    
    On définit de même la notion de morphisme de torseurs pour des $G$-torseurs à droite.
\end{Def}

\begin{Def}
    Soit $G$ un groupe. On définit une loi de groupe $*$ sur $G$ en posant pour tous $g,h\in G$, $g*h=hg$. Le groupe $(G,*)$ est appelé \emph{groupe opposé de $G$} et est noté $G^{op}$.
\end{Def}

\begin{Rem}
    L'application $g\longmapsto g^{-1}$ est un isomorphisme de groupes $G\longrightarrow G^{op}$.
\end{Rem}

\begin{Prop}\label{EquivCatTorBitor}
    \begin{enumerate}[label=\textup{(\roman*)}]
        \item\label{EquivCatTorBitor1} Soit $E$ un $G$-torseur. Alors $E$ est un $(G,\Aut_{\tor{G-}}(E)^{op})$-bitorseur.
        \item\label{EquivCatTorBitor2} Si $E$ est un $(G,G')$-bitorseur, alors $G'\cong\Aut_{\tor{G-}}(E)^{op}$.
    \end{enumerate}
\end{Prop}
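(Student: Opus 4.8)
The proof of both parts is a direct verification; the one thing to keep straight is the bookkeeping of the opposite-group convention, and the only step needing an actual construction (rather than formula-chasing) is the transitivity clause in \ref{EquivCatTorBitor1}. I will treat $E$ as a left $G$-torseur throughout (the right-torseur case being symmetric).

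For \ref{EquivCatTorBitor1}: given $\varphi\in\Aut_{\tor{G-}}(E)$ and $e\in E$, set $e\cdot\varphi:=\varphi(e)$. This is a right action of $\Aut_{\tor{G-}}(E)^{op}$: indeed $(e\cdot\varphi)\cdot\psi=\psi(\varphi(e))=(\psi\circ\varphi)(e)$, and since the product of $\varphi$ then $\psi$ in $\Aut_{\tor{G-}}(E)^{op}$ is $\psi\circ\varphi$, this equals $e\cdot(\varphi*\psi)$, while $e\cdot\Id=e$. The two actions commute, for $g\cdot(e\cdot\varphi)=g\cdot\varphi(e)=\varphi(g\cdot e)=(g\cdot e)\cdot\varphi$, which is exactly the defining equivariance of a $G$-torseur morphism. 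Freeness: if $\varphi(e)=e$ for one $e$, then writing an arbitrary $e'=g\cdot e$ (possible since the left action is transitive) gives $\varphi(e')=g\cdot\varphi(e)=g\cdot e=e'$, so $\varphi=\Id$. Transitivity: given $e,e'\in E$, define $\varphi\colon E\to E$ by $\varphi(g\cdot e):=g\cdot e'$; this is well defined and bijective because every element of $E$ is \emph{uniquely} of the form $g\cdot e$ (and likewise $g\cdot e'$), it is $G$-equivariant by construction, and $\varphi(e)=e'$.

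For \ref{EquivCatTorBitor2}: let $E$ be a $(G,G')$-bitorseur. For $g'\in G'$ let $\rho_{g'}\colon E\to E$, $e\mapsto e\cdot g'$; it is a bijection since the right action is a group action, and $\rho_{g'}(g\cdot e)=(g\cdot e)\cdot g'=g\cdot(e\cdot g')=g\cdot\rho_{g'}(e)$ by commutation of the actions, so $\rho_{g'}\in\Aut_{\tor{G-}}(E)$. I claim $g'\mapsto\rho_{g'}$ is an isomorphism $G'\to\Aut_{\tor{G-}}(E)^{op}$. It is a homomorphism to the opposite group: $\rho_{g'h'}(e)=e\cdot(g'h')=(e\cdot g')\cdot h'=\rho_{h'}(\rho_{g'}(e))$, i.e. $\rho_{g'h'}=\rho_{h'}\circ\rho_{g'}=\rho_{g'}*\rho_{h'}$. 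It is injective because the right action is free: $\rho_{g'}=\Id$ forces $e\cdot g'=e$, hence $g'=1$. It is surjective: given $\varphi\in\Aut_{\tor{G-}}(E)$, fix $e_0\in E$ and, using transitivity of the right action, pick $g'$ with $\varphi(e_0)=e_0\cdot g'$; then for any $e=g\cdot e_0$ we get $\varphi(e)=g\cdot\varphi(e_0)=g\cdot(e_0\cdot g')=(g\cdot e_0)\cdot g'=\rho_{g'}(e)$, so $\varphi=\rho_{g'}$.

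I expect no real obstacle: everything reduces to formula-chasing once the convention $\varphi*\psi=\psi\circ\varphi$ in $\Aut_{\tor{G-}}(E)^{op}$ is fixed, which is precisely what makes left equivariance of the maps turn into a \emph{right} action. The only place where the torseur hypothesis (non-emptiness, plus freeness and transitivity of the left action) is genuinely used beyond bookkeeping is the construction of the equivariant bijection $\varphi$ sending $e$ to $e'$ in \ref{EquivCatTorBitor1} and, symmetrically, the choice of $g'$ with $\varphi(e_0)=e_0\cdot g'$ in \ref{EquivCatTorBitor2}. One could also deduce \ref{EquivCatTorBitor2} from \ref{EquivCatTorBitor1} by observing that a $(G,G')$-bitorseur is in particular a left $G$-torseur, so both $G'$ and $\Aut_{\tor{G-}}(E)^{op}$ act on $E$ on the right compatibly with $G$; but I prefer to exhibit $\rho$ explicitly, as it makes surjectivity transparent.
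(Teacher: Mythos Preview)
Your argument is correct: the verifications of the action axioms, commutation, freeness, and transitivity in \ref{EquivCatTorBitor1} are all sound, and the explicit isomorphism $g'\mapsto\rho_{g'}$ in \ref{EquivCatTorBitor2} is exactly the right map, with injectivity and surjectivity cleanly deduced from freeness and transitivity of the right action.

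The paper does not actually argue anything here: it simply refers the reader to Giraud, \emph{Cohomologie non abélienne}, chapitre~III, paragraphe~1.5.3.1 and proposition~1.5.4. Your write-up therefore supplies what the paper outsources; it is the standard elementary verification one would expect, and nothing in it conflicts with the cited source. If anything, your closing remark that \ref{EquivCatTorBitor2} could be deduced from \ref{EquivCatTorBitor1} is closer in spirit to how Giraud organises things (the right-acting group of a bitorseur being canonically identified with the automorphism group of the underlying left torseur), whereas your chosen direct construction of $\rho$ has the advantage of being self-contained and of making the isomorphism explicit rather than abstract.
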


\begin{proof}
    \ref{EquivCatTorBitor1} Voir \cite{Giraud}, chapitre III paragraphe 1.5.3.1.\\
    \ref{EquivCatTorBitor2} Voir \cite{Giraud}, chapitre III proposition 1.5.4.
\end{proof}

\begin{Prop}\label{isom}
    \begin{enumerate}[label=\textup{(\roman*)}]
        \item\label{isom1} Si $E$ est un $G$-torseur, alors le choix d'un élément $e\in E$ induit un isomorphisme $\Aut_{\tor{G-}}(E)\cong G^{op}$.
        \item\label{isom2} Si $E$ est un $(G,G')$-bitorseur, alors le choix d'un élément $e\in E$ induit des isomorphismes inverses l'un de l'autre $\Ad_e : G\longrightarrow G'$ et $\Ad'_e : G'\longrightarrow G$ tels que pour tous $g\in G,g'\in G'$, $e\cdot\Ad_e(g)=g\cdot e$ et $\Ad'_e(g')\cdot e = e\cdot g'$.
    \end{enumerate}
\end{Prop}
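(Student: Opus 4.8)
The plan is to exploit the defining feature of a torseur — that fixing a base point turns the action map into a bijection — and to transport the whole situation onto the group itself, then read off the group laws.

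For \ref{isom1}, fix $e\in E$ and let $\theta_e\colon G\longrightarrow E$, $g\longmapsto g\cdot e$, which is a bijection since $G$ acts freely and transitively (I treat the case of a left torseur; the right case is identical). First I would observe that any $\varphi\in\Aut_{\tor{G-}}(E)$ is determined by $\varphi(e)$: writing $\varphi(e)=a_\varphi\cdot e$ for the unique $a_\varphi\in G$, $G$-equivariance gives $\varphi(g\cdot e)=g\cdot\varphi(e)=(ga_\varphi)\cdot e$, so that $\varphi=\theta_e\circ R_{a_\varphi}\circ\theta_e^{-1}$ where $R_a$ denotes right translation by $a$ on $G$. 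Conversely, for every $a\in G$ the map $\theta_e\circ R_a\circ\theta_e^{-1}$ is a well-defined $G$-equivariant bijection with inverse $\theta_e\circ R_{a^{-1}}\circ\theta_e^{-1}$, so $\varphi\longmapsto a_\varphi$ is a bijection $\Aut_{\tor{G-}}(E)\longrightarrow G$. It remains to compute the effect on composition: $(\varphi\circ\psi)(e)=\varphi(a_\psi\cdot e)=a_\psi\cdot\varphi(e)=(a_\psi a_\varphi)\cdot e$, hence $a_{\varphi\circ\psi}=a_\psi a_\varphi=a_\varphi * a_\psi$ in $G^{op}$, so $\varphi\longmapsto a_\varphi$ is an isomorphism $\Aut_{\tor{G-}}(E)\cong G^{op}$.

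For \ref{isom2}, fix $e\in E$. Given $g\in G$, the element $g\cdot e$ belongs to $E$, and since $E$ is a right $G'$-torseur there is a unique $g'\in G'$ with $g\cdot e=e\cdot g'$; I set $\Ad_e(g):=g'$, so the formula $g\cdot e=e\cdot\Ad_e(g)$ holds by construction. That $\Ad_e$ is a homomorphism follows from associativity of the actions together with their commutation: $e\cdot\Ad_e(g_1g_2)=(g_1g_2)\cdot e=g_1\cdot\bigl(e\cdot\Ad_e(g_2)\bigr)=(g_1\cdot e)\cdot\Ad_e(g_2)=e\cdot\bigl(\Ad_e(g_1)\Ad_e(g_2)\bigr)$, and freeness of the right action forces $\Ad_e(g_1g_2)=\Ad_e(g_1)\Ad_e(g_2)$. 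Symmetrically, using that $E$ is a left $G$-torseur, I define $\Ad'_e(g')$ as the unique element of $G$ with $\Ad'_e(g')\cdot e=e\cdot g'$, and the same computation shows $\Ad'_e\colon G'\longrightarrow G$ is a homomorphism. Finally the two are mutually inverse: for $g\in G$, $\Ad'_e(\Ad_e(g))\cdot e=e\cdot\Ad_e(g)=g\cdot e$, so freeness of the left action gives $\Ad'_e(\Ad_e(g))=g$, and $\Ad_e\circ\Ad'_e=\Id_{G'}$ is obtained in the same way. (One could instead deduce \ref{isom2} from Proposition \ref{EquivCatTorBitor} and \ref{isom1}, which yield $G'\cong\Aut_{\tor{G-}}(E)^{op}\cong(G^{op})^{op}=G$, but the explicit formulas for $\Ad_e$ and $\Ad'_e$ come out most transparently from the direct construction.)

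I do not expect any genuine obstacle: the statement is essentially a bookkeeping exercise with the freeness and transitivity axioms. The only points needing care are getting the direction of composition right — this is exactly why $G^{op}$ rather than $G$ appears in \ref{isom1} — and keeping track, in \ref{isom2}, of which of the two actions (left or right) is used to define $\Ad_e$ versus $\Ad'_e$; and, before any computation, making sure the phrases "the unique element such that $\dots$" are legitimately justified by the torseur axioms.
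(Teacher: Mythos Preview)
Your proof of \ref{isom1} is correct and essentially the same as the paper's: the paper simply states that the map $G^{op}\to\Aut_{\tor{G-}}(E)$, $g\mapsto\Phi_e(g)$ characterized by $\Phi_e(g)(e)=g\cdot e$, is an isomorphism, which is exactly the inverse of your $\varphi\mapsto a_\varphi$.

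For \ref{isom2} the two routes diverge. The paper deduces it in one line from \ref{isom1} together with Proposition~\ref{EquivCatTorBitor}\ref{EquivCatTorBitor2} (the Giraud result $G'\cong\Aut_{\tor{G-}}(E)^{op}$), composing the two isomorphisms. You instead construct $\Ad_e$ and $\Ad'_e$ directly from the torseur axioms and verify by hand that they are mutually inverse homomorphisms --- and you even mention the paper's route as a parenthetical alternative. Your direct argument is self-contained and makes the defining formulas $e\cdot\Ad_e(g)=g\cdot e$ and $\Ad'_e(g')\cdot e=e\cdot g'$ emerge immediately, which is useful since these formulas are invoked explicitly later in the paper (e.g.\ in Proposition~\ref{SousBitor} and the definition of $\mathcal{I}^\wr$); the paper's route is shorter but requires unpacking the abstract isomorphism to recover these explicit formulas.
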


\begin{proof}
    \ref{isom1} On vérifie immédiatement que poser, pour $g\in G$, $\Phi_e(g)$ l'unique automorphisme de torseurs vérifiant $\Phi_e(g)(e) = g\cdot e$ définit un isomorphisme de groupes $G^{op}\longrightarrow\Aut_{\tor{G-}}(E)$.\\
    \ref{isom2} Découle de \ref{isom1} et de \ref{EquivCatTorBitor}.\ref{EquivCatTorBitor2}.
\end{proof}

\begin{Prop}\label{SousBitor}
    Soient $E$ un $(G,G')$-bitorseur, $H'\subset G'$ un sous-groupe et $e \in E$. Alors $e\cdot H'$ est un $(\Ad'_e(H'),H')$-bitorseur.
\end{Prop}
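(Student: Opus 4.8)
Le plan est de vérifier directement, sur le sous-ensemble non vide $e\cdot H'\subset E$, les trois conditions de la définition d'un $(\Ad'_e(H'),H')$-bitorseur, en s'appuyant sur les relations de la Proposition \ref{isom}.\ref{isom2}, en particulier sur l'égalité $\Ad'_e(g')\cdot e=e\cdot g'$ valable pour tout $g'\in G'$.

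Je commencerais par l'action à droite, qui est la plus facile : $e\cdot H'$ est manifestement stable sous l'action à droite de $H'$, et cette action est libre, étant la restriction à $e\cdot H'$ de l'action libre à droite de $H'$ sur $E$ (elle-même restriction de celle de $G'$) ; elle est transitive puisque, pour $a,b\in H'$, on a $(e\cdot a)\cdot(a^{-1}b)=e\cdot b$ avec $a^{-1}b\in H'$. Ainsi $e\cdot H'$ est un $H'$-torseur à droite. Je traiterais ensuite l'action à gauche du sous-groupe $K:=\Ad'_e(H')$ de $G$ (c'en est bien un, puisque $\Ad'_e$ est un morphisme de groupes). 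Le point à vérifier est la stabilité de $e\cdot H'$ sous cette action ; elle résulte du calcul
\begin{align*}
\Ad'_e(h')\cdot(e\cdot k')=\big(\Ad'_e(h')\cdot e\big)\cdot k'=(e\cdot h')\cdot k'=e\cdot(h'k')\in e\cdot H'
\end{align*}
pour $h',k'\in H'$, où l'on a utilisé successivement la commutativité des actions de $G$ et $G'$ sur $E$, puis la relation définissant $\Ad'_e$. Le même calcul, appliqué à $h'=ba^{-1}$ avec $a,b\in H'$, montre que l'action à gauche de $K$ sur $e\cdot H'$ est transitive ; et elle est libre comme restriction à $e\cdot H'$ de l'action libre de $K$ sur $E$. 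Donc $e\cdot H'$ est un $K$-torseur à gauche.

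Il resterait enfin à constater que les actions de $K$ et de $H'$ sur $e\cdot H'$ commutent, ce qui est immédiat puisqu'elles sont respectivement restrictions des actions de $G$ et de $G'$ sur $E$, lesquelles commutent par hypothèse. On conclurait que $e\cdot H'$ est bien un $(\Ad'_e(H'),H')$-bitorseur. Aucune de ces étapes ne présente d'obstacle sérieux ; la seule subtilité est de ne pas confondre les côtés gauche et droit, et le seul endroit où la structure particulière de $\Ad'_e$ intervient véritablement est la vérification de la stabilité de $e\cdot H'$ sous l'action à gauche.
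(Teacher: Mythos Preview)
Ta démonstration est correcte et constitue exactement la vérification directe attendue ; le papier énonce d'ailleurs cette proposition sans preuve, la considérant comme immédiate. Ton calcul $\Ad'_e(h')\cdot(e\cdot k')=(e\cdot h')\cdot k'=e\cdot(h'k')$ est bien le point clé, et tes remarques sur la liberté (par restriction) et la commutativité des actions sont justes.
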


\subsection{Isomorphismes de bitorseurs}

\begin{Def}
    Soient $E$ un $(G,G')$-bitorseur et $F$ un $(H,H')$-bitorseur. On dit que $E$ et $F$ sont \emph{isomorphes} s'il existe des isomorphismes de groupes $\varphi : G\longrightarrow H,\varphi' : G'\longrightarrow H'$ et une bijection $\psi : E\longrightarrow F$ tels que pour tous $x\in E,g\in G,g'\in G'$ on ait $\psi(g\cdot x) = \varphi(g)\cdot\psi(x)$ et $\psi(x\cdot g') = \psi(x)\cdot \varphi'(g')$.
    
    On dira alors que $(\varphi,\psi,\varphi')$ est un \emph{isomorphisme de bitorseurs entre $E$ et $F$}.
\end{Def}

\begin{Rem}
    Si $G=H$ (resp. si $G'=H'$) et si $\varphi=\Id_G$ (resp. si $\varphi'=\Id_{G'}$) alors $\psi$ est un isomorphisme de $G$-torseurs à gauche (resp. un isomorphisme de $G'$-torseurs à droite).
\end{Rem}

\begin{Prop}\label{IsoBitorseur}
    Soit $E$ un $(G,G')$-bitorseur. Le choix d'un élément $e\in E$ fournit :
    \begin{enumerate}[label=\textup{(\roman*)}]
        \item  un isomorphisme de bitorseurs $(\Id_G,\psi_e,\Ad'_e)$ entre $E$ et le $(G,G)$-bitorseur trivial $G$ vérifiant pour tous $x \in E,g'\in G'$, $\psi_e(x)\cdot e = x$ ;
        \item un isomorphisme de bitorseurs $(\Ad_e,\psi'_e,\Id_{G'})$ entre $E$ et le $(G',G')$-bitorseur trivial $G'$ vérifiant pour tous $x \in E,g\in G$, $e\cdot\psi'_e(x) = x$.
    \end{enumerate}
\end{Prop}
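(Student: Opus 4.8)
The plan is to construct both isomorphisms by hand, exploiting the freeness and transitivity of each of the two actions on $E$. For part (i), since $E$ is a left $G$-torseur, every $x\in E$ can be written uniquely as $g\cdot e$ with $g\in G$; I would define $\psi_e\colon E\longrightarrow G$ by letting $\psi_e(x)$ be precisely this element $g$, so that the relation $\psi_e(x)\cdot e=x$ holds by construction. Well-definedness and bijectivity of $\psi_e$ are exactly transitivity and freeness of the left action. It then remains to check that $(\Id_G,\psi_e,\Ad'_e)$ is an isomorphism of bitorseurs from $E$ onto the trivial $(G,G)$-bitorseur (the group $G$ acting on itself by left and right translation, the two actions commuting by associativity).

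The left-equivariance is immediate: if $x=g\cdot e$ then $h\cdot x=(hg)\cdot e$, whence $\psi_e(h\cdot x)=hg=h\,\psi_e(x)$. The right-equivariance is where the twist by $\Ad'_e$ enters: writing $x=g\cdot e$ and using $e\cdot g'=\Ad'_e(g')\cdot e$ from Proposition \ref{isom}.\ref{isom2}, together with the commutation of the two actions, one gets $x\cdot g'=g\cdot(e\cdot g')=\bigl(g\,\Ad'_e(g')\bigr)\cdot e$, so that $\psi_e(x\cdot g')=\psi_e(x)\,\Ad'_e(g')$. Since $\Ad'_e\colon G'\longrightarrow G$ is a group isomorphism (again Proposition \ref{isom}.\ref{isom2}), this completes (i).

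For part (ii) I would argue symmetrically on the right. As $E$ is a right $G'$-torseur, each $x\in E$ equals $e\cdot g'$ for a unique $g'\in G'$, and I set $\psi'_e(x):=g'$, so that $e\cdot\psi'_e(x)=x$. The right-equivariance onto the trivial $(G',G')$-bitorseur is immediate from associativity, and the left-equivariance uses $g\cdot e=e\cdot\Ad_e(g)$ to produce the twist by $\Ad_e$: namely $g\cdot x=g\cdot(e\cdot g')=\bigl(e\cdot\Ad_e(g)\bigr)\cdot g'=e\cdot\bigl(\Ad_e(g)\,g'\bigr)$, hence $\psi'_e(g\cdot x)=\Ad_e(g)\,\psi'_e(x)$, and $\Ad_e$ is an isomorphism of groups by Proposition \ref{isom}.\ref{isom2}.

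I do not expect a genuine obstacle here; the only point requiring attention is bookkeeping the conventions — that the twisting isomorphisms $\Ad_e,\Ad'_e$ point in the directions fixed by Proposition \ref{isom}.\ref{isom2} and satisfy $e\cdot\Ad_e(g)=g\cdot e$ and $\Ad'_e(g')\cdot e=e\cdot g'$, and that the "trivial bitorseur" attached to a group is the group acting on itself by left and right translation. Once these are pinned down, each equivariance property is a one-line substitution.
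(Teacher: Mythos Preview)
Your argument is correct and is exactly the unfolding of what the paper records as ``Immédiat'': you define $\psi_e$ and $\psi'_e$ via the unique factorisations afforded by freeness and transitivity, and then check the two equivariances using the defining relations of $\Ad_e$ and $\Ad'_e$ from Proposition~\ref{isom}.\ref{isom2}. There is nothing to add or to compare.
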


\begin{proof}
    Immédiat.
\end{proof}

\begin{Rem}
    Si on dispose d'un isomorphisme de groupes $G\longrightarrow H$, alors on a un isomorphisme de bitorseurs entre $E$ et le $(H,H)$-bitorseur trivial $H$.
\end{Rem}

\subsection{Ensembles cycliques}

Soit $n\in\mathbf{N}^*$. Dans toute la suite on notera $C_n = \mathbf{Z}/n\mathbf{Z}$.

\begin{Def}
    Un \emph{ensemble cyclique} est un couple $(E,\sigma)$ où $E$ est un ensemble fini et où $\sigma\in \mathfrak{S}(E)$ est un $\#E$-cycle.
    
    Si $E=\{e_1,\ldots,e_n\}$ et $\sigma = (e_1,\ldots,e_n)$ on note $(E,\sigma) = [e_1,\ldots,e_n]$.
\end{Def}

\begin{Ex}
    Tout ensemble à deux éléments possède une unique structure d'ensemble cyclique.
\end{Ex}

\begin{Prop}
    Soient $E$ un ensemble fini. La donnée d'une structure cyclique sur $E$ est équivalente à la donnée d'une structure de $C_{\#E}$-torseur.
\end{Prop}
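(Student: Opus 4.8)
The plan is to exhibit a bijection between the set of $n$-cycles of $\mathfrak{S}(E)$, where $n=\#E$, and the set of free transitive left actions of $C_n$ on $E$, and to observe that it is canonical enough to count as an equivalence of the two kinds of data on $E$.

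First I would send an $n$-cycle $\sigma$ to the action defined by $\bar k\cdot e=\sigma^k(e)$ for $k\in\mathbf{Z}$ and $e\in E$. The one point that needs checking is well-definedness: since $\sigma$ is an $n$-cycle on a set with $n$ elements, it has order exactly $n$, so $\sigma^n=\Id$ and the formula depends only on $k$ modulo $n$; it is then a group action because $\sigma^{k+\ell}=\sigma^k\sigma^\ell$. Transitivity holds because the orbit of any $e$ under $\langle\sigma\rangle$ is all of $E$, which is precisely the meaning of "$n$-cycle"; and since $\#C_n=\#E$, a transitive action of $C_n$ on $E$ is automatically free, the stabiliser of each point being trivial for cardinality reasons. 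So the action is indeed a $C_n$-torseur structure.

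Conversely I would send a $C_n$-torseur structure on $E$ to the permutation $\sigma:e\mapsto\bar 1\cdot e$. An immediate induction gives $\sigma^k(e)=\bar k\cdot e$ for all $k\ge 0$; transitivity then forces the $\langle\sigma\rangle$-orbit of any $e$ to be all of $E$, while freeness forces the $n$ elements $e,\sigma(e),\dots,\sigma^{n-1}(e)$ to be pairwise distinct, so $\sigma$ is an $n$-cycle.

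Finally I would check that the two constructions are mutually inverse, which is purely formal: starting from $\sigma$, the recovered permutation sends $e$ to $\bar 1\cdot e=\sigma(e)$; starting from an action, the recovered action sends $(\bar k,e)$ to $\sigma^k(e)=\bar k\cdot e$. The only steps demanding genuine (if slight) attention — and thus the main obstacle — are the well-definedness of the action modulo $n$ and the fact that the permutation attached to a torseur is an $n$-cycle rather than merely some element of $\mathfrak{S}(E)$; both rest on the coincidence between $\#E$, the order of $\sigma$, and $\#C_n$.
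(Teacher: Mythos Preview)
Your argument is correct and is exactly the natural way to make precise what the paper dismisses with a single word: its proof is simply ``Immédiat.'' You have supplied the obvious bijection $\sigma\longleftrightarrow(\bar k\cdot e:=\sigma^k(e))$ and checked the only two points that deserve a sentence (well-definedness modulo $n$, and that $e\mapsto\bar 1\cdot e$ is an $n$-cycle), so there is nothing to compare---your proof is just an unpacking of the paper's claim of immediacy.
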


\begin{proof}
    Immédiat.
\end{proof}

\subsection{Fibrés principaux discrets}

Soit $G$ un groupe.

\begin{Def}
    Un \emph{fibré discret} est la donnée de deux ensembles $E$ et $B$ appelés respectivement l'\emph{espace total} et la \emph{base} et d'une application surjective $p : E\longrightarrow B$. Dans toute la suite on parlera de fibrés à la place de fibrés discrets.
    
    Si $x\in B$, on appelle \emph{fibre au dessus de $x$} l'ensemble $E_x=p^{-1}(\{x\})$.
    Un \emph{morphisme de fibrés} entre deux fibrés $(E,B,p)$ et $(E',B',p')$ est un couple $(\varphi,\Phi)$ où $\varphi : B\longrightarrow B'$ et $\Phi : E\longrightarrow E'$ sont deux applications telles que $\varphi\circ p = p'\circ\Phi$.
    
    Dans la suite, s'il n'y a pas d'ambiguïté, on se référera aux fibrés seulement par leur base ou par leur espace total.
\end{Def}

\begin{Rem}
    On définit ainsi la catégorie des fibrés (discrets). Un morphisme $(\varphi,\Phi)$ est alors un isomorphisme si et seulement si $\varphi$ et $\Phi$ sont des bijections.
\end{Rem}

\begin{Def}
    On notera $\Iso_{\fib{}}((E,B,p),(E',B',p'))$ l'ensemble des isomorphismes de fibrés $(E,B,p)\longrightarrow(E',B',p')$ (resp. $\Aut_{\fib{}}(E,B,p)$ l'ensemble des automorphismes de fibré de $(E,B,p)$).
\end{Def}

\begin{Lem}
    Soit $(\varphi,\Phi) : (E,B,p)\longrightarrow(E',B',p')$ un morphisme de fibrés. Alors pour tout $x\in B,\Phi(E_x) \subset E'_{\varphi(x)}$.
\end{Lem}

\begin{proof}
    Évidente.
\end{proof}

\begin{Def}
    Un \emph{$G$-fibré} $(E,B,p)$ est un fibré où les fibres de $E$ sont des $G$-ensembles. Si les fibres sont des $G$-torseurs, on dit que $(E,B,p)$ est un \emph{$G$-fibré principal}.
    
    Un \emph{morphisme de $G$-fibrés} est un morphisme de fibrés $(\varphi,\Phi)$ tel que la restriction de $\Phi$ à chaque fibre est un $G$-morphisme.
\end{Def}

\begin{Rems}
    $\centerdot$ La catégorie des $G$-fibrés principaux (discrets) est équivalente à la catégorie des $G$-ensembles libres.\\
    $\centerdot$ Si $(E,B,p)$ est un $G$-fibré transitif (i.e. si l'action de $G$ sur chaque fibre de $E$ est transitive), alors $B$ s'identifie à $G\backslash E$.
\end{Rems}

\begin{Def}
    Soit $(E,B,p)$ un fibré. Un \emph{marquage de $(E,B,p)$}, ou une \emph{section de $(E,B,p)$} est une application $s : B \longrightarrow E$ telle que $p\circ s = \Id_B$
\end{Def}

\begin{Prop}
    Soit $(E,B,p)$ un $G$-fibré principal. La donnée d'un marquage $s$ de $(E,B,p)$ donne un isomorphisme de groupes $\varphi_s :\Aut_{\fib{G-}}(E)\longrightarrow\mathfrak{S}(B)\ltimes G^B$ tel qu'en notant $\varphi_s(\sigma,\Phi) =(\sigma,(g_x)_{x\in B})$ ($(\sigma,\Phi)\in\Aut_{\fib{G-}}(E)$), pour tout $x\in B$, $g_x$ soit l'unique élément de $G$ vérifiant $\Phi(g_x\cdot s(x)) = s(\sigma(x))$.
\end{Prop}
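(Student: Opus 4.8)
The plan is to take $\varphi_s$ to be exactly the map described in the statement and to verify, in turn, that it is well defined, that it is a bijection, and that it is multiplicative; the work is entirely elementary, and the only delicate point is the bookkeeping in the multiplicativity check, which is also what pins down the precise group law meant by $\mathfrak{S}(B)\ltimes G^B$.

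\emph{Well-definedness.} Let $(\sigma,\Phi)\in\Aut_{\fib{G-}}(E)$. By the preceding Lemma and since $(\sigma,\Phi)$ is an isomorphism of fibrés, $\Phi$ restricts to a bijection $\Phi|_{E_x}\colon E_x\to E_{\sigma(x)}$ for each $x\in B$, and by definition of a morphism of $G$-fibrés this restriction is a morphism of $G$-torseurs. Since $E_{\sigma(x)}$ is a $G$-torseur, there is a unique element of $G$ carrying $\Phi(s(x))$ to $s(\sigma(x))$; combined with the $G$-equivariance of $\Phi|_{E_x}$ this is exactly a unique $g_x\in G$ with $\Phi(g_x\cdot s(x))=s(\sigma(x))$. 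Thus $\varphi_s(\sigma,\Phi):=(\sigma,(g_x)_{x\in B})$ is a well-defined element of $\mathfrak{S}(B)\times G^B$, with in particular $\varphi_s(\Id_B,\Id_E)=(\Id_B,(1_G)_{x\in B})$.

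\emph{Bijectivity.} For injectivity, if $\varphi_s(\sigma,\Phi)=\varphi_s(\sigma',\Phi')$ then $\sigma=\sigma'$ and $g_x=g'_x$ for all $x$; a morphism of $G$-torseurs is determined by its value at a single point, and $\Phi|_{E_x}$ and $\Phi'|_{E_x}$ agree at $g_x\cdot s(x)$ (both sending it to $s(\sigma(x))$), so $\Phi|_{E_x}=\Phi'|_{E_x}$ for every $x$, whence $\Phi=\Phi'$ since the fibres cover $E$. For surjectivity, given $(\sigma,(g_x)_x)\in\mathfrak{S}(B)\times G^B$ I would define $\Phi$ fibrewise: for $y\in E_x$ write $y=h\cdot s(x)$ with $h\in G$ unique (torseur structure) and set $\Phi(y):=(hg_x^{-1})\cdot s(\sigma(x))\in E_{\sigma(x)}$. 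One checks directly that $\Phi$ is $G$-equivariant on each fibre, that $\sigma\circ p=p\circ\Phi$, and that $\Phi$ is a bijection (its fibrewise inverse sends $z=m\cdot s(\sigma(x))$ to $(mg_x)\cdot s(x)$, and $\sigma$ is a bijection of $B$), so $(\sigma,\Phi)\in\Aut_{\fib{G-}}(E)$; moreover $\varphi_s(\sigma,\Phi)=(\sigma,(g_x)_x)$ since $\Phi(g_x\cdot s(x))=s(\sigma(x))$ by construction.

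\emph{Multiplicativity.} This is the step that needs care. Let $\varphi_s(\sigma,\Phi)=(\sigma,(g_x))$ and $\varphi_s(\tau,\Psi)=(\tau,(h_x))$, and compute $\varphi_s\big((\sigma,\Phi)\circ(\tau,\Psi)\big)=\varphi_s(\sigma\tau,\Phi\circ\Psi)=(\sigma\tau,(k_x))$. For $k\in G$, equivariance of $\Psi|_{E_x}$ gives $\Psi(k\cdot s(x))=(kh_x^{-1})\cdot s(\tau(x))$, and then equivariance of $\Phi|_{E_{\tau(x)}}$ gives $(\Phi\circ\Psi)(k\cdot s(x))=(kh_x^{-1}g_{\tau(x)}^{-1})\cdot s(\sigma\tau(x))$; this equals $s(\sigma\tau(x))$ exactly when $k=g_{\tau(x)}h_x$, so $k_x=g_{\tau(x)}h_x$. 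Hence $\varphi_s$ is a bijection intertwining composition in $\Aut_{\fib{G-}}(E)$ with the law $(\sigma,(g_x))\cdot(\tau,(h_x))=(\sigma\tau,(g_{\tau(x)}h_x)_x)$ on $\mathfrak{S}(B)\times G^B$, which is precisely the multiplication of the semidirect product $\mathfrak{S}(B)\ltimes G^B$ for the action of $\mathfrak{S}(B)$ permuting the coordinates of $G^B$ (a one-line verification gives associativity, neutral element $(\Id_B,(1_G)_x)$, and inverse $(\sigma,(g_x))^{-1}=(\sigma^{-1},(g_{\sigma^{-1}(x)}^{-1})_x)$). Therefore $\varphi_s$ is the asserted group isomorphism. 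The only genuine subtlety is keeping track of the inverses and noticing that it is the \emph{first} factor $\Phi$ whose data gets reindexed by $\tau$ in the product rule; an alternative, essentially bookkeeping-free route is to use the section $s$ to trivialize $E\cong B\times G$ once and for all and then simply read off both the definition of $\varphi_s$ and its multiplicativity from that trivialization.
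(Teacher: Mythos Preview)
Your proof is correct and complete; it supplies in full the verification that the paper dismisses with a one-word ``Immédiat.'' There is no alternative route to compare: the map $\varphi_s$ is specified in the statement, so the only thing to do is check well-definedness, bijectivity, and multiplicativity, which you do carefully. Your explicit identification of the resulting group law $(\sigma,(g_x))\cdot(\tau,(h_x))=(\sigma\tau,(g_{\tau(x)}h_x)_x)$ is useful, since the paper never spells out which semidirect-product convention is intended here (and indeed the very next Corollaire passes to a reindexed variant $\phi_s$ landing in $G\wr\mathfrak{S}(B)$, so the distinction matters).
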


\begin{proof}
    Immédiat.
\end{proof}

\begin{Cor}\label{Structure}
    Soit $(E,B,p)$ un $G$-fibré principal. La donnée d'un marquage $s$ de $(E,B,p)$ donne un isomorphisme de groupes $\phi_s : \Aut_{\fib{G-}}(E) \longrightarrow G\wr\mathfrak{S}(B)$ tel qu'en notant $\phi_s(\sigma,\Phi) =((g'_x)_{x\in B},\sigma)$ ($(\sigma,\Phi)\in\Aut_{\fib{G-}}(E)$), pour tout $x\in B$, $g'_x$ soit l'unique élément de $G$ vérifiant $g'_{\sigma(x)}\cdot \Phi(s(x)) = s(\sigma(x))$.
\end{Cor}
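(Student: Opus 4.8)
Le plan est de déduire l'énoncé de la proposition précédente, en composant l'isomorphisme qu'elle fournit avec l'isomorphisme canonique entre le produit semi-direct $\mathfrak{S}(B)\ltimes G^B$ et le produit en couronne $G\wr\mathfrak{S}(B)=G^B\rtimes\mathfrak{S}(B)$.

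La proposition précédente donne un isomorphisme de groupes $\varphi_s : \Aut_{\fib{G-}}(E)\longrightarrow\mathfrak{S}(B)\ltimes G^B$ tel que, en écrivant $\varphi_s(\sigma,\Phi)=(\sigma,(g_x)_{x\in B})$, pour tout $x\in B$ l'élément $g_x$ soit l'unique élément de $G$ vérifiant $\Phi(g_x\cdot s(x))=s(\sigma(x))$. On introduit alors l'application $\iota : \mathfrak{S}(B)\ltimes G^B\longrightarrow G\wr\mathfrak{S}(B)$ définie par $\iota(\sigma,(g_x)_{x\in B})=\big((g_{\sigma^{-1}(x)})_{x\in B},\sigma\big)$. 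On vérifie, par un calcul direct utilisant les lois de composition des deux produits (dans lesquels $\mathfrak{S}(B)$ agit sur $G^B$ en permutant les indices), que $\iota$ est un morphisme de groupes, et il est visiblement bijectif d'inverse $(f,\sigma)\longmapsto(\sigma,(f(\sigma(x)))_{x\in B})$ ; c'est donc un isomorphisme. On pose enfin $\phi_s=\iota\circ\varphi_s$, qui est alors un isomorphisme de groupes $\Aut_{\fib{G-}}(E)\longrightarrow G\wr\mathfrak{S}(B)$.

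Il reste à vérifier que les coordonnées $g'_x$ de $\phi_s(\sigma,\Phi)=((g'_x)_{x\in B},\sigma)$ sont bien celles de l'énoncé. Par construction $g'_x=g_{\sigma^{-1}(x)}$, c'est-à-dire $g'_{\sigma(x)}=g_x$ pour tout $x\in B$. Comme la restriction de $\Phi$ à chaque fibre est un $G$-morphisme, on a $\Phi(g_x\cdot s(x))=g_x\cdot\Phi(s(x))$, donc $g'_{\sigma(x)}\cdot\Phi(s(x))=g_x\cdot\Phi(s(x))=s(\sigma(x))$. L'unicité d'un tel $g'_{\sigma(x)}$ vient de ce que $\Phi(s(x))$ et $s(\sigma(x))$ appartiennent tous deux à la fibre $E_{\sigma(x)}$ — pour $\Phi(s(x))$, parce que $(\sigma,\Phi)$ est un morphisme de fibrés et $p(s(x))=x$, donc $p(\Phi(s(x)))=\sigma(x)$ — et que l'action de $G$ sur cette fibre, qui est un $G$-torseur, est libre.

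Le point le plus délicat est purement conventionnel : il faut fixer soigneusement les conventions des lois de groupe sur $\mathfrak{S}(B)\ltimes G^B$ et sur $G\wr\mathfrak{S}(B)$ (ordre des facteurs, sens de l'action par permutation des indices) pour que $\iota$ soit effectivement un morphisme de groupes, et non un anti-morphisme, et pour que le décalage d'indice $g'_{\sigma(x)}=g_x$ soit compatible avec l'indexation choisie dans l'énoncé du corollaire.
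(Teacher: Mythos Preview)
Your proposal is correct and follows the same approach as the paper: deduce the corollary from the preceding proposition via the evident reindexing $g'_{\sigma(x)}=g_x$. The paper's own proof is simply ``Immédiat'', so you have merely made explicit the composition with the isomorphism $\mathfrak{S}(B)\ltimes G^B\cong G\wr\mathfrak{S}(B)$ and the verification that $g'_{\sigma(x)}\cdot\Phi(s(x))=s(\sigma(x))$, which the paper leaves to the reader.
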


\begin{proof}
    Immédiat.
\end{proof}

\section{Pièces géométriques et pièces matérielles du Rubik's Revenge}\label{II}

Dans cette section nous allons mettre en évidence la structure de bitorseur du Rubik's Revenge. Plus précisément nous allons définir séparément les pièces géométriques (arêtes, sommets et centres agencés dans l'espace mais non coloriés) et les pièces matérielles (arêtes, sommets et centres coloriés mais non agencés dans l'espace). Ces pièces (géométriques et matérielles) vont former des produits de fibrés principaux dont nous nous servirons dans la suite.

\subsection{Pièces géométriques}

\begin{Def}
    Le Rubik's Revenge non colorié est appelé le \emph{Rubik's Revenge géométrique}. Les pièces non coloriées du Rubik's Revenge sont appelées les \emph{pièces géométriques}. On note $\ArGeom$ (resp. $\SomGeom$, resp. $\CentGeom$) l'ensemble des \emph{pièces géométriques d'arête} (resp. les \emph{pièces géométriques de sommet}, resp. \emph{les pièces géométriques de centre}).
    
    On note $\Faces$ l'ensemble des faces des pièces géométriques du Rubik's Revenge et on note $p$ l'application de $\Faces$ dans l'ensemble des pièces géométriques qui à une face $F$ associe la pièce géométrique qu'elle borde. On notera encore $p$ les restrictions de $p$ à $\ArGeom$, $\SomGeom$ et $\CentGeom$.
    
    On notera $\Faces_\ArGeom =p^{-1}(\ArGeom),\Faces_\SomGeom= p^{-1}(\SomGeom)$ et $\Faces_{\CentGeom}=p^{-1}(\CentGeom)$.
\end{Def}

\begin{Lem}
    $\Faces$ est partitionné en 56 sous-ensembles suivant la pièce que les faces bordent.
\end{Lem}

\begin{proof}
    Évidente.
\end{proof}

\begin{Def}
    Si $\alpha$ est une pièce géométrique du Rubik's Revenge, on note $\Faces_\alpha$ l'ensemble des faces qui bordent la pièce $\alpha$.
\end{Def}

\begin{Rem}
    Si $\alpha\in\ArGeom$ (resp. si $\alpha\in\SomGeom$, resp. $\alpha\in \CentGeom$), $\#\Faces_\alpha=2$ (resp. $\#\Faces_\alpha=3$, resp. $\#\Faces_\alpha=1$).
\end{Rem}

\begin{Lem}
    Les triplets $(\Faces_\ArGeom,\ArGeom,p),(\Faces_\SomGeom,\SomGeom,p)$ et $(\Faces_\CentGeom,\CentGeom,p)$ sont des fibrés. Plus précisément $\Faces_\ArGeom$ est un $C_2$-fibré principal, $\Faces_\CentGeom$ est un fibré principal trivial et la structure de $C_3$-fibré principal de $\SomGeom$ est donnée comme suit. Soit $\beta\in\SomGeom$. On définit un axe orienté de $\beta$ vers le centre du Rubik's Revenge, et alors la rotation d'un tiers de tour (dans le sens direct) autour de cet axe induit un 3-cycle $\sigma_\beta\in\mathfrak{S}(\Faces_\beta)$ (voir la figure \ref{FigRotSomGeom}).
    
    On identifiera le fibré $(\Faces_\CentGeom,\CentGeom,p)$ à l'ensemble $\CentGeom$.
\end{Lem}

\begin{proof}
    Comme les fibres de $\ArGeom$ sont toutes de cardinal 2, il n'existe qu'une seule structure de $C_2$-torseur sur chaque fibre.
\end{proof}

\begin{figure}[H]
    \centering
    \includegraphics[width = 10cm]{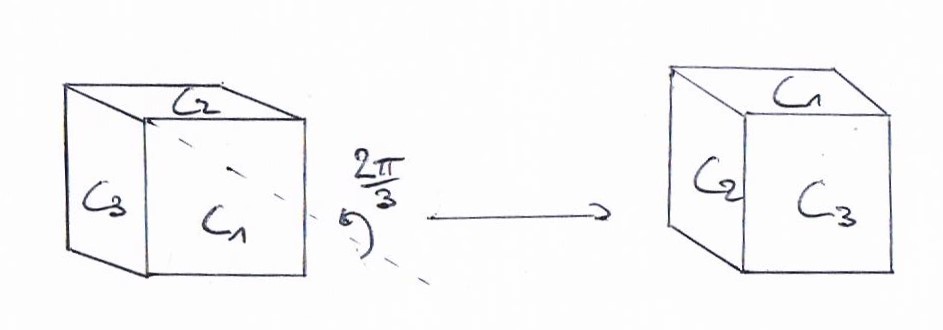}
    \caption{3-cycle induit par la rotation du sommet géométrique}\label{FigRotSomGeom}
\end{figure}

\subsection{Pièces matérielles}

\begin{Def}
    Le Rubik's Revenge colorié, où on oublie les informations sur les positions des pièces entre elles, est appelé le \emph{Rubik's Revenge matériel}. Quitte à numéroter les pièces coloriées pour les différencier, on note $\ArMat$ (resp. $\SomMat$, resp. $\CentMat$) l'ensemble des \emph{pièces matérielles d'arête} (resp. les \emph{pièces matérielles de sommet}, resp. \emph{les pièces matérielles de centre}).
\end{Def}

\begin{Rem}
    Les mêmes résultats s'appliquent exactement de la même façon aux pièces matérielles qu'aux pièces géométriques. De même que pour les sommets géométriques, pour doter $\SomMat$ d'une structure fibrée principale sur $C_3$, il suffit de choisir une structure cyclique sur chaque fibre.
\end{Rem}

\begin{Def}
    On choisira les structures cycliques suivantes sur les sommets matériels : $[\Bla,\Ble,\R]$, $[\Bla,\O,\Ble]$, $[\Bla,\V,\O]$, $[\Bla,\R,\V]$, $[\J,\V,\O]$, $[\J,\Ble,\O]$,\\ $[\J,\V,\R]$ et  $[\J,\O,\V]$.
\end{Def}

\begin{Rem}
    \`A la différence des pièces géométriques, certaines pièces matérielles sont indiscernables.
\end{Rem}

\begin{Def}
    On note $\Coul$ l'ensemble $\Coul=\{\Bla,\J,\Ble,\V,\O,\R\}$.
    
    On note $\pi_\mathcal{A} : \ArMat \longrightarrow \mathcal{P}_2(\Coul)$ l'application qui à une arête matérielle associe l'ensemble de ses couleurs. De même on note $\pi_\mathcal{C} : \CentMat\longrightarrow\Coul$ l'application qui à un centre matériel associe sa couleur.
    
    Deux pièces matérielles de même(s) couleur(s) (i.e. telles que $\pi_\mathcal{A}(\alpha)=\pi_\mathcal{A}(\alpha')$ pour deux arêtes matérielles $\alpha$ et $\alpha'$, ou $\pi_\mathcal{C}(\gamma)=\pi_\mathcal{C}(\gamma')$ pour deux centres matériels $\gamma$ et $\gamma'$) sont dites \emph{indiscernables}.
\end{Def}

\begin{Rem}
    Il n'existe pas de sommets matériels indiscernables.
\end{Rem}

\subsection{Les coloriages du Rubik's Revenge comme produit d'isomorphismes de fibrés principaux entre pièces géométriques et pièces matérielles}

Nous allons maintenant définir ce qu'est un \emph{coloriage du Rubik's Revenge} en nous servant des structures de fibrés que nous avons construites précédemment.

\begin{Def}
    Un \emph{coloriage marqué du Rubik's Revenge} est un triplet $(\mathfrak{f},\mathfrak{g},h)$ où $\mathfrak{f}:\ArGeom\longrightarrow\ArMat, \mathfrak{g}:\SomGeom\longrightarrow\SomMat$ et $h:\CentGeom\longrightarrow\CentMat$ sont des isomorphismes de fibrés principaux. 
    
    On note $\Rev_{marq}$ l'ensemble des coloriages marqués du Rubik's Revenge. On notera dans la suite $\mathcal{T}\ArGeom = \Aut_{\fib{C_2-}}(\ArGeom), \mathcal{T}\ArMat = \Aut_{\fib{C_2-}}(\ArMat)$, \\$\mathcal{T}\SomGeom = \Aut_{\fib{C_3-}}(\SomGeom)$, et $\mathcal{T}\SomMat = \Aut_{\fib{C_3-}}(\SomMat)$.
    
    Enfin, on appelle \emph{groupe des transformations géométriques} le groupe $\mathcal{T}_{geom} = \mathcal{T}\ArGeom\times\mathcal{T}\SomGeom\times\mathfrak{S}(\CentGeom)$, et on appelle \emph{groupe des transformations matérielles} le groupe $\mathcal{T}_{mat} = \mathcal{T}\ArMat\times\mathcal{T}\SomMat\times\mathfrak{S}(\CentMat)$.
\end{Def}

\begin{Rems}
    $\centerdot$ $\CentGeom$ et $\CentMat$ étant des fibrés principaux sur le groupe trivial, un isomorphisme de fibrés principaux entre eux n'est rien d'autre qu'une bijection.\\
    $\centerdot$ Les coloriages sont dits \emph{marqués} car on a numéroté les pièces matérielles pour pouvoir les distinguer.
\end{Rems}

\section{Le groupe des automorphismes des pièces matérielles et le sous-groupe des transformations des pièces indiscernables}\label{III}

La différence notable entre le Rubik's Cube et le Rubik's Revenge est que, contrairement au Cube, le Rubik's Revenge possède des pièces indiscernables. Un groupe agit naturellement sur ces pièces, en les permutant.

\begin{Def}
    On note $\widetilde{\mathcal{I}_\mathcal{A}}$ le sous-groupe de $\mathfrak{S}(\ArMat)$ des éléments $\sigma$ vérifiant $\pi_\mathcal{A}\circ\sigma=\pi_\mathcal{A}$.
\end{Def}

Dans la suite, si deux arêtes matérielles $\alpha$ et $\alpha'$ (resp. deux centres matériels $\gamma$ et $\gamma'$) sont indiscernables, on identifiera les fibres au dessus de $\alpha$ et $\alpha'$ à deux copies de $\pi_\mathcal{A}(\alpha)=\pi_\mathcal{A}(\alpha')$.

\begin{Lem}
    Soit $\varphi : \widetilde{\mathcal{I}_\mathcal{A}} \longrightarrow \mathcal{T}\ArMat, \sigma \longmapsto (\sigma,\Phi)$, où pour tout $\alpha\in\ArMat$, $\Phi_{|p^{-1}(\{\alpha\})}=\Id_{\pi_\mathcal{A}(\alpha)}$. Alors $\varphi$ est un morphisme de groupes injectif.
\end{Lem}

\begin{proof}
    Par définition si $\sigma\in\widetilde{\mathcal{I}_\mathcal{A}}$, on peut identifier les fibres au dessus de $\alpha$ et de $\sigma(\alpha)$ pour toute arête $\alpha$ et donc le relèvement d'une permutation $\sigma\in\widetilde{\mathcal{I}_\mathcal{A}}$ en un morphisme de la forme $(\sigma,\Phi)$ a bien un sens puisque $\Id_{\pi_\mathcal{A}(\alpha)}$ est un isomorphisme de $C_2$-torseurs $\pi_\mathcal{A}(\alpha)\longrightarrow\pi_\mathcal{A}(\sigma(\alpha))$ (et donc l'application $\varphi$ est bien définie). De plus $\varphi$ est un morphisme de groupes, si $\pi : \mathcal{T}\ArMat\longrightarrow \mathfrak{S}(\ArMat)$ est la projection canonique, alors la composée $\pi\circ\varphi$ est l'injection canonique $\widetilde{\mathcal{I}_\mathcal{A}}\longrightarrow\mathfrak{S}(\ArMat)$, ce qui montre l'injectivité de $\varphi$.
\end{proof}

\begin{Def}
    On appelle \emph{groupe des transformations indiscernables des arêtes} et on note $\mathcal{I}(\ArMat)$ le groupe $\varphi(\widetilde{\mathcal{I}_\mathcal{A}})$.
    
    Le \emph{groupe des transformations indiscernables des centres}, noté $\mathcal{I}(\CentMat)$, est le sous-groupe de $\mathfrak{S}(\CentMat)$ des éléments $\phi$ vérifiant $\pi_\mathcal{C}\circ\phi=\pi_\mathcal{C}$.
    
    On appelle \emph{groupe des transformations indiscernables} le sous-groupe de $\mathcal{T}_{mat}$ $\mathcal{I}=\mathcal{I}(\ArMat)\times\{1\}\times\mathcal{I}(\CentMat)$.
\end{Def}

\begin{Rem}
    On a les isomorphismes $\mathcal{I}(\ArMat) \cong \mathfrak{S}_2^{12}, \mathcal{I}(\CentMat)\cong \mathfrak{S}_4^6,$ et $\mathcal{I}\cong \mathfrak{S}_2^{12}\times \mathfrak{S}_4^6$.
\end{Rem}

\section{Le groupe des automorphismes des pièces géométriques et le sous-groupe des transformations licites}\label{IV}

De même qu'un groupe agit naturellement sur les pièces indiscernables, un autre groupe agit naturellement sur les pièces géométriques, en les transformant en respectant les règles du jeu.

\subsection{Définition des générateurs des transformations licites}

Dans toute la suite nous considérerons le marquage $s$ des arêtes géométriques que l'on peut observer sur la figure \ref{FigMarquageAretes}, et on prendra $\varsigma$ un marquage quelconque des sommets.
\begin{figure}[H]
    \centering
    \includegraphics[width = 5cm]{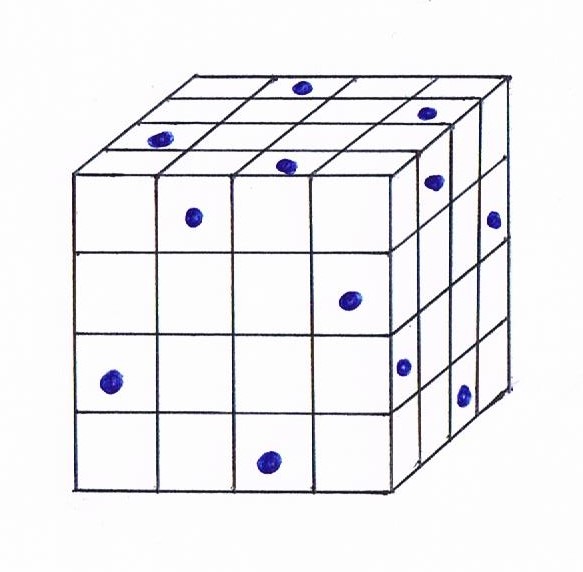}
    \caption{Choix du marquage des arêtes}\label{FigMarquageAretes}
\end{figure}

Les choix de ces marquages nous permettent d'identifier $\mathcal{T}_{geom}$ au groupe $(C_2\wr\mathfrak{S}(\ArGeom))\times (C_3\wr\mathfrak{S}(\SomGeom))\times \mathfrak{S}(\CentGeom)$.

\begin{Def}
    Les \emph{mouvements élémentaires de tranche du Rubik's Revenge} sont notés $B, MB, MF, F, L, ML, MR, R, D, MD, MU, U$.
    
    Ils engendrent un sous-groupe de $\mathcal{T}_{geom}$, noté $\mathcal{L}$ et appelé le \emph{groupe des transformations licites du Rubik's Revenge}.
    
    On note $\mathcal{T}'_{geom} = \big\{\big((\sigma,\Phi),(\tau,\Psi),\phi)\in\mathcal{T}_{geom}\big|\forall \alpha\in\ArGeom,\Phi(s(\alpha))=s(\sigma(\alpha))\big\}$, le sous-groupe de $\mathcal{T}_{geom}$ des transformations qui préservent le marquage $s$.
\end{Def}

\begin{Prop}
    $\mathcal{L}$ est un sous-groupe de $\mathcal{T}'_{geom}$.
\end{Prop}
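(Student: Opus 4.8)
The plan is to reduce the statement to two verifications: first, that $\mathcal{T}'_{geom}$ really is a subgroup of $\mathcal{T}_{geom}$, and second, that each of the twelve elementary slice moves $B,MB,MF,F,L,ML,MR,R,D,MD,MU,U$ belongs to $\mathcal{T}'_{geom}$. Since these moves generate $\mathcal{L}$ by definition, the inclusion $\mathcal{L}\subset\mathcal{T}'_{geom}$ follows at once, which is what has to be proved.

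For the first point, observe that the condition cutting out $\mathcal{T}'_{geom}$ constrains only the arête-component $(\sigma,\Phi)$ of a transformation, leaving the sommet- and centre-components entirely free; so it suffices to check that the set of $(\sigma,\Phi)\in\mathcal{T}\ArGeom$ with $\Phi(s(\alpha))=s(\sigma(\alpha))$ for all $\alpha$ is a subgroup of $\mathcal{T}\ArGeom$. This is a direct computation: the identity satisfies the condition; if $(\sigma_1,\Phi_1)$ and $(\sigma_2,\Phi_2)$ do, then $(\Phi_1\circ\Phi_2)(s(\alpha))=\Phi_1(s(\sigma_2(\alpha)))=s(\sigma_1(\sigma_2(\alpha)))$; and replacing $\alpha$ by $\sigma^{-1}(\alpha)$ in $\Phi(s(\alpha))=s(\sigma(\alpha))$ gives $\Phi^{-1}(s(\alpha))=s(\sigma^{-1}(\alpha))$. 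Equivalently, under the identification of $\mathcal{T}\ArGeom$ with $C_2\wr\mathfrak{S}(\ArGeom)$ provided by $s$ (Corollaire \ref{Structure}), this is exactly the subgroup on which every $C_2$-coordinate is trivial, so $\mathcal{T}'_{geom}\cong\mathfrak{S}(\ArGeom)\times\mathcal{T}\SomGeom\times\mathfrak{S}(\CentGeom)$.

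For the second point — the heart of the matter — fix one generator $g$. On the edge pieces, $g$ is the identity outside one slice of the cube, and on that slice it coincides with an orientation-preserving rotation $\bar g$ of space; off the slice there is nothing to check, so only the moved edges matter. One reads off from Figure \ref{FigMarquageAretes} that $s$ admits the following intrinsic description: each edge slot ``points toward'' one of the two cube corners lying on its edge, around that corner the three incident faces carry a well-defined cyclic order (counterclockwise, viewed from outside the cube), and $s$ selects, among the two faces of the slot's edge, the one which is immediately followed by the other in this cyclic order. Now because $\bar g$ preserves orientation, it carries the outward cyclic order around a corner $c$ to the one around $\bar g(c)$; if $g$ moves a slot pointing toward $c$ to a slot pointing toward $c'$, then $c'=\bar g(c)$; and the effect of $g$ on the two faces of the moved edge is the effect of $\bar g$ on faces. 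Combining these, $g$ sends the marked face of each slot to the marked face of the image slot, i.e. the arête-component $(\sigma,\Phi)$ of $g$ satisfies $\Phi(s(\alpha))=s(\sigma(\alpha))$, so $g\in\mathcal{T}'_{geom}$. (Alternatively one verifies this slot by slot for the twelve generators; by the symmetries of $s$ this reduces to one outer face turn and one inner slice turn.)

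The only genuine subtlety lies in this second step, and more precisely in the inner slice moves: the corner toward which a moved edge slot points is not itself displaced by an inner slice turn, so one must track it not by $g$ but by the ambient rotation $\bar g$, which is why the description of $s$ via the cyclic orientation around corners is exactly the right one. Once that bookkeeping is kept straight, the orientation-preservation of $\bar g$ does all the work and everything else is routine.
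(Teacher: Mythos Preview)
Your proof is correct and follows the same logical route as the paper: show that each of the twelve generators of $\mathcal{L}$ lies in $\mathcal{T}'_{geom}$. The paper's own proof is the single sentence ``Les générateurs du groupe $\mathcal{L}$ sont des éléments de $\mathcal{T}'_{geom}$,'' leaving the verification to the reader via Figure~\ref{FigMarquageAretes}; you supply that verification, together with the (strictly speaking necessary) check that $\mathcal{T}'_{geom}$ is closed under composition and inversion.

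The one genuine addition in your write-up is the intrinsic description of the marking $s$ in terms of the counterclockwise cyclic order of faces around the nearer corner. This turns what the paper treats as a picture-check into a uniform argument: because each generator acts on its slice as the restriction of an orientation-preserving rigid motion $\bar g$, and because ``nearest corner'' and ``counterclockwise order'' are carried along by $\bar g$, the marked facet is automatically sent to the marked facet. Your remark about inner slice moves---that the relevant corner is $\bar g(c)$ rather than $g(c)$, since the corner itself does not move---is exactly the bookkeeping needed to make this work, and it is handled correctly. The paper gains brevity by omitting all of this; your version gains a conceptual explanation of \emph{why} the marking in Figure~\ref{FigMarquageAretes} is the right one.
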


\begin{proof}
    Les générateurs du groupe $\mathcal{L}$ sont des éléments de $\mathcal{T}'_{geom}$.
\end{proof}

\begin{Def}
    On note $\mathcal{T}_{geom}^\wr= (C_2\wr\mathfrak{S}(\ArGeom))\times(C_3\wr\mathfrak{S}(\SomGeom))\times\mathfrak{S}(\CentGeom)$.
     
    On note $\mathcal{T}^{\prime\wr}_{geom}=(\{0\}\wr\mathfrak{S}(\ArGeom))\times(C_3\wr\mathfrak{S}(\SomGeom))\times\mathfrak{S}(\CentGeom) \subset\mathcal{T}_{geom}^\wr$.
\end{Def}

\begin{Lem}
    L'isomorphisme $\mathcal{T}_{geom}\longrightarrow\mathcal{T}_{geom}^\wr$ induit par les choix des marquages des arêtes géométriques et des sommets géométriques se restreint en un isomorphisme $\mathcal{T}'_{geom}\longrightarrow\mathcal{T}_{geom}^{\prime\wr}$.
\end{Lem}

\begin{proof}
    Évidente.
\end{proof}

\begin{Def}
    On note $\mathcal{L}^\wr$ l'image de $\mathcal{L}$ par l'isomorphisme $\mathcal{T}_{geom}\longrightarrow\mathcal{T}_{geom}^\wr$ induit par les choix des marquages.
\end{Def}

Dans toute la suite on travaillera avec $\mathcal{L}^\wr$ plutôt qu'avec $\mathcal{L}$.

\subsection{Caractérisation des transformations licites}

\begin{Def}
    On appelle \emph{morphisme caractéristique} et on note $\chi$ le morphisme $\chi :\mathcal{T}^{\prime\wr}_{geom}\longrightarrow C_3\times \{-1,1\} , (\sigma,(\theta,\tau),\phi) \longmapsto \big(\sum_\beta\theta_\beta,\varepsilon(\tau)\varepsilon(\phi)\big)$.
\end{Def}

\begin{Theo}[Caractérisation des transformations licites du Rubik's Revenge]
    Le morphisme $\chi$ caractérise $\mathcal{L}^\wr$, dans le sens où $\mathcal{L}^\wr=\ker(\chi)$.
\end{Theo}

\begin{proof}
    Voir \cite{La}.
\end{proof}

\begin{Cor}\label{ind1}
    $\mathcal{L}$ est un sous-groupe de $\mathcal{T}'_{geom}$ d'indice 6.
\end{Cor}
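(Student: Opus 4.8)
The plan is to reduce the statement to the surjectivity of the characteristic morphism $\chi$, and then to check that surjectivity by exhibiting two explicit elements. The inclusion $\mathcal{L}\subset\mathcal{T}'_{geom}$ is already the content of the preceding proposition, so only the index remains to be computed. First I would transport everything through the isomorphism $\mathcal{T}_{geom}\longrightarrow\mathcal{T}_{geom}^\wr$ induced by the choice of marquages: by the lemma above it restricts to an isomorphism $\mathcal{T}'_{geom}\longrightarrow\mathcal{T}^{\prime\wr}_{geom}$, and by definition of $\mathcal{L}^\wr$ it carries $\mathcal{L}$ onto $\mathcal{L}^\wr$ (which lies in $\mathcal{T}^{\prime\wr}_{geom}$ precisely because $\mathcal{L}\subset\mathcal{T}'_{geom}$). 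Hence
\begin{align*}
[\mathcal{T}'_{geom}:\mathcal{L}]=[\mathcal{T}^{\prime\wr}_{geom}:\mathcal{L}^\wr]=[\mathcal{T}^{\prime\wr}_{geom}:\ker\chi],
\end{align*}
the last equality being the Théorème de caractérisation, $\mathcal{L}^\wr=\ker\chi$. By the first isomorphism theorem applied to $\chi:\mathcal{T}^{\prime\wr}_{geom}\longrightarrow C_3\times\{-1,1\}$, this index equals $\#\,\textup{im}(\chi)$, a divisor of $\#(C_3\times\{-1,1\})=6$. So it suffices to show $\chi$ is onto.

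To prove surjectivity I would produce, inside $\mathcal{T}^{\prime\wr}_{geom}=(\{0\}\wr\mathfrak{S}(\ArGeom))\times(C_3\wr\mathfrak{S}(\SomGeom))\times\mathfrak{S}(\CentGeom)$, two elements whose $\chi$-images generate $C_3\times\{-1,1\}$. For the $C_3$ factor: fixing a vertex $\beta_0\in\SomGeom$ and taking the element with trivial arête component, $\tau=\Id$, $\phi=\Id$, and twist vector $\theta_{\beta_0}=1$, $\theta_\beta=0$ for $\beta\neq\beta_0$ (a legitimate element of $C_3\wr\mathfrak{S}(\SomGeom)$), one gets $\chi$-image $(1,1)$. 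For the sign factor: taking $\theta=0$, $\sigma=\Id$, $\tau=\Id$ and $\phi$ a transposition of two centres, one gets $\chi$-image $\big(0,\varepsilon(\Id)\varepsilon(\phi)\big)=(0,-1)$. Since $(1,1)$ and $(0,-1)$ generate $C_3\times\{-1,1\}$, the morphism $\chi$ is surjective, whence $[\mathcal{T}'_{geom}:\mathcal{L}]=\#\,\textup{im}(\chi)=6$.

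\textbf{Expected difficulty.} Granting the characterisation theorem $\mathcal{L}^\wr=\ker\chi$, there is no real obstacle; the only points demanding a little care are purely bookkeeping ones: making sure the index is computed in the correct ambient group — hence the systematic passage through the isomorphism $\mathcal{T}'_{geom}\cong\mathcal{T}^{\prime\wr}_{geom}$ — and checking that the two ad hoc elements above genuinely lie in $\mathcal{T}^{\prime\wr}_{geom}$, which is immediate since their arête component is trivial on fibres (as $\mathcal{T}^{\prime\wr}_{geom}$ requires) and the vertex twist, resp. the centre transposition, are honest elements of $C_3\wr\mathfrak{S}(\SomGeom)$, resp. $\mathfrak{S}(\CentGeom)$.
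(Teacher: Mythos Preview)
Your argument is correct and follows exactly the route the paper intends: the corollary is stated without proof immediately after the characterisation theorem $\mathcal{L}^\wr=\ker\chi$, so the implicit argument is precisely the first isomorphism theorem together with the (tacit) surjectivity of $\chi$. You have simply made the surjectivity explicit, which the paper leaves to the reader.
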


\begin{Cor}
    $\mathcal{L}$ est un sous-groupe de $\mathcal{T}_{geom}$ d'indice $6.2^{24}$.
\end{Cor}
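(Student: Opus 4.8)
The plan is to reduce to Corollaire~\ref{ind1} via the multiplicativity of group indices. Writing
\[
[\mathcal{T}_{geom}:\mathcal{L}] = [\mathcal{T}_{geom}:\mathcal{T}'_{geom}]\cdot[\mathcal{T}'_{geom}:\mathcal{L}],
\]
the second factor equals $6$ by Corollaire~\ref{ind1}, so everything comes down to showing that $[\mathcal{T}_{geom}:\mathcal{T}'_{geom}] = 2^{24}$. To compute this I would transport the question to the wreath-product picture: by the Lemme preceding the definition of $\mathcal{L}^\wr$, the isomorphism $\mathcal{T}_{geom}\longrightarrow\mathcal{T}_{geom}^\wr$ induced by the choices of markings $s$ and $\varsigma$ restricts to an isomorphism $\mathcal{T}'_{geom}\longrightarrow\mathcal{T}^{\prime\wr}_{geom}$, hence preserves indices, and it suffices to evaluate $[\mathcal{T}_{geom}^\wr:\mathcal{T}^{\prime\wr}_{geom}]$.

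Next, since the factors $C_3\wr\mathfrak{S}(\SomGeom)$ and $\mathfrak{S}(\CentGeom)$ appear unchanged in both $\mathcal{T}_{geom}^\wr$ and $\mathcal{T}^{\prime\wr}_{geom}$, the index splits off and equals $[\,C_2\wr\mathfrak{S}(\ArGeom):\{0\}\wr\mathfrak{S}(\ArGeom)\,]$. Identifying $C_2\wr\mathfrak{S}(\ArGeom)$ with the semidirect product $C_2^{\ArGeom}\rtimes\mathfrak{S}(\ArGeom)$, the subgroup $\{0\}\wr\mathfrak{S}(\ArGeom)$ is precisely the complement $\mathfrak{S}(\ArGeom)$, so the index is the order of the base group, namely $\#C_2^{\ArGeom} = 2^{\#\ArGeom}$. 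Since the Rubik's Revenge has $12$ geometric cube-edges each carrying $2$ edge pieces, $\#\ArGeom = 24$, whence $[\mathcal{T}_{geom}:\mathcal{T}'_{geom}] = 2^{24}$ and therefore $[\mathcal{T}_{geom}:\mathcal{L}] = 6\cdot 2^{24}$.

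There is essentially no serious obstacle here: the argument is pure bookkeeping on top of Corollaire~\ref{ind1}. The only points deserving a word of care are that $\{0\}\wr\mathfrak{S}(\ArGeom)$ really is a complement to the base group $C_2^{\ArGeom}$ inside the wreath product (so that the index is \emph{exactly} $2^{\#\ArGeom}$ and not merely bounded by it), and the bare count $\#\ArGeom = 24$; both are immediate from the definitions of Section~\ref{II}.
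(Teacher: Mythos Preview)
Your argument is correct and matches the paper's intended reasoning: the paper states this corollary without proof, treating it as immediate from Corollaire~\ref{ind1} together with the index computation $[\mathcal{T}_{geom}:\mathcal{T}'_{geom}]=2^{24}$, which is exactly what you spell out. There is nothing to add.
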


\begin{Cor}\label{ind2}
    La restriction de l'action de $\mathcal{T}_{geom}$ sur $\Rev_{marq}$ à $\mathcal{L}$ donne une action libre à $6.2^{24}$ orbites, toutes de même cardinal.
\end{Cor}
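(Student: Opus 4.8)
Le plan est de reconnaître $\Rev_{marq}$ comme un $\mathcal{T}_{geom}$-torseur à droite, puis d'en déduire tout l'énoncé par des arguments formels sur les torseurs. Je commencerais par rappeler que, par définition, $\Rev_{marq} = \Iso_{\fib{C_2-}}(\ArGeom,\ArMat)\times\Iso_{\fib{C_3-}}(\SomGeom,\SomMat)\times\Iso_{\fib{}}(\CentGeom,\CentMat)$, et que $\mathcal{T}_{geom}$ y agit à droite facteur par facteur, par précomposition avec les automorphismes des structures géométriques. Il faut d'abord observer que les trois ensembles d'isomorphismes sont non vides : $\ArGeom$ et $\ArMat$ sont deux $C_2$-fibrés principaux à $24$ fibres donc isomorphes, $\SomGeom$ et $\SomMat$ deux $C_3$-fibrés principaux à $8$ fibres donc isomorphes, et $\CentGeom$, $\CentMat$ deux fibrés principaux triviaux sur des ensembles à $24$ éléments, donc isomorphes (un isomorphisme n'étant alors qu'une bijection). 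D'après le principe général rappelé au début de la section \ref{I}, chaque ensemble d'isomorphismes est un bitorseur, donc en particulier un torseur à droite sous le groupe d'automorphismes de la structure de départ ; un produit de torseurs à droite étant un torseur à droite sous le produit des groupes, $\Rev_{marq}$ est un $\mathcal{T}_{geom}$-torseur à droite. Les groupes $\mathcal{T}\ArGeom\cong C_2\wr\mathfrak{S}(\ArGeom)$, $\mathcal{T}\SomGeom\cong C_3\wr\mathfrak{S}(\SomGeom)$ et $\mathfrak{S}(\CentGeom)$ étant finis (corollaire \ref{Structure}), $\Rev_{marq}$ sera fini, de cardinal $\#\mathcal{T}_{geom}$.

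Ensuite, je noterais que la restriction à $\mathcal{L}$ de cette action reste libre, la restriction à un sous-groupe d'une action libre étant libre. Comme de plus l'action de $\mathcal{T}_{geom}$ est transitive, le choix d'un élément $x_0\in\Rev_{marq}$ fournit une bijection $\mathcal{T}_{geom}\longrightarrow\Rev_{marq}$, $g\longmapsto x_0\cdot g$ (liberté et transitivité, cf. proposition \ref{isom}), qui transporte les classes à gauche $g\mathcal{L}$ sur les $\mathcal{L}$-orbites $x_0\cdot g\cdot\mathcal{L}$ : l'ensemble des $\mathcal{L}$-orbites est donc en bijection avec $\mathcal{T}_{geom}/\mathcal{L}$, dont le cardinal est l'indice $[\mathcal{T}_{geom}:\mathcal{L}]=6\cdot 2^{24}$ obtenu au corollaire précédent. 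Il y a ainsi $6\cdot 2^{24}$ orbites, et, l'action étant libre, chacune est de cardinal $\#\mathcal{L}$, si bien qu'elles ont toutes le même cardinal, à savoir $\#\Rev_{marq}/(6\cdot 2^{24})$.

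Il n'y a pas d'obstacle réel dans cette démonstration ; le seul point demandant un peu de soin est la gestion des conventions : s'assurer que l'action de $\mathcal{T}_{geom}$ en jeu est bien l'action par précomposition (d'où une action \emph{à droite}), que les trois facteurs de $\Rev_{marq}$ sont simultanément des torseurs pour les trois facteurs de $\mathcal{T}_{geom}$, et qu'on se sert bien de la transitivité de l'action de $\mathcal{T}_{geom}$ \emph{tout entier} (et non seulement de sa liberté) pour identifier les $\mathcal{L}$-orbites aux classes $\mathcal{T}_{geom}/\mathcal{L}$. Ce cadre une fois posé, l'énoncé découle directement des résultats de la section \ref{I} et du corollaire donnant l'indice de $\mathcal{L}$ dans $\mathcal{T}_{geom}$.
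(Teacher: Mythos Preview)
Ta démonstration est correcte et suit exactement l'idée sous-jacente du papier : celui-ci énonce le corollaire sans preuve, le considérant comme conséquence immédiate du corollaire précédent sur l'indice de $\mathcal{L}$ dans $\mathcal{T}_{geom}$ et de la structure de $\mathcal{T}_{geom}$-torseur à droite de $\Rev_{marq}$ établie dans les sections \ref{I} et \ref{II}. Tu ne fais qu'expliciter soigneusement ce que le papier laisse implicite.
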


\section{Une première réponse aux problématiques}\label{V}

Dans les sections précédentes nous avons décrit le Rubik’s Revenge en termes
d’actions de groupes. C’est donc en termes d’actions de groupes que nous allons
répondre aux questions \textbf{(A)} et \textbf{(B)}. Ainsi \textbf{(A)} se reformule de la manière suivante
: quel est le cardinal de $\mathcal{I}\backslash\Rev_{marq}/\mathcal{L}$ ?

La question \textbf{(B)} se reformule quant à elle : que vaut $\frac{\#\mathcal{I}\cdot x_0\cdot\mathcal{L}}{\#\Rev_{marq}}$ ?

\subsection{Calcul d'isomorphismes}

Dans la suite nous allons faire le choix d'un élément $x\in \Rev_{marq}$ pour pouvoir utiliser les différents isomorphismes qu'on a construit dans la section \ref{I} de manière à préserver la structure de bitorseur de $\Rev_{marq}$. Nous allons donc choisir l'élément $x=x_0$, état \emph{initial}, ou \emph{résolu}, du Rubik's Revenge.

\begin{Def}
    On notera $\mathcal{I}^\wr\subset\mathcal{T}_{geom}^\wr$ l'image de $\mathcal{I}\subset\mathcal{T}_{mat}$ par l'application $(\phi_s\times\phi'_\varsigma\times\Id_{\mathfrak{S}(\CentGeom)})\circ \Ad_{x_0}$, où $\phi_s$ (resp. $\phi'_\varsigma$) est l'isomorphisme $\mathcal{T}\ArGeom\cong C_2\wr\mathfrak{S}(\ArGeom)$ (resp. l'isomorphisme $\mathcal{T}\SomGeom\cong C_3\wr\mathfrak{S}(\SomGeom)$) obtenu grâce au corollaire \ref{Structure} en prenant le marquage des arêtes géométriques $s$ décrit précédemment (resp. en prenant le marquage des sommets géométriques quelconque $\varsigma$).
\end{Def}

\begin{Rem}
    $x_0$ s'écrivant sous la forme $x_0=(\mathfrak{f},\mathfrak{g},h)$ où $\mathfrak{f}=(f,F)$ et $\mathfrak{g}=(g,G)$ sont des isomorphismes et $h$ est une bijection, $x_0^{-1}$ a bien un sens et alors l'application $\Ad_{x_0}$ est la conjugaison par $x_0^{-1}$.
\end{Rem}

\begin{Lem}\label{CaracI}
    $i=\big(((\rho_\alpha),\sigma),(0,\Id),\phi\big)\in\mathcal{I^\wr}$ si et seulement si les conditions suivantes sont vérifiées :
    \begin{enumerate}[label=\textup{(\roman*)}]
        \item\label{CaracI1} $\sigma\in f^{-1}\widetilde{\mathcal{I}_\mathcal{A}}f$ ;
        \item\label{CaracI2} $\phi\in h^{-1}\mathcal{I}(\CentMat)h$ ;
        \item\label{CaracI3} si $\alpha\neq\alpha'$ sont des arêtes géométriques telles que $f(\alpha)$ et $f(\alpha')$ sont indiscernables, alors $\left\{\begin{array}{c}
                 \sigma_{|\{\alpha,\alpha'\}}=\Id \Rightarrow \rho_\alpha=\rho_{\alpha'}=0 \\
                 \sigma_{|\{\alpha,\alpha'\}}\neq\Id \Rightarrow \rho_\alpha=\rho_{\alpha'}=1
            \end{array}\right.$.
    \end{enumerate}
\end{Lem}

\begin{proof}
    Si $i\in\mathcal{I}^\wr$, \ref{CaracI1} et \ref{CaracI2} sont évidemment vérifiées par définition de $\mathcal{I}$ et $\mathcal{I}^\wr$. Nous allons utiliser l'observation suivante pour montrer que si $i\in\mathcal{I}^\wr$, alors on a \ref{CaracI3} : avec le marquage $s$ des arêtes géométriques que l'on a fait, pour toute paire d'arêtes géométriques $\alpha,\alpha'$ telles que $f(\alpha)$ et $f(\alpha')$ sont indiscernables, on a $F(s(\alpha))\neq F(s(\alpha'))$.
    
    Or d'après le corollaire \ref{Structure}, si $((\rho_\alpha)_{\alpha\in\ArGeom},\sigma)$ est l'image d'un isomorphisme $(\sigma',\Phi)\in\mathcal{I}(\ArMat)$, alors $\sigma = f^{-1}\sigma'f$ et pour tout $\alpha\in\ArGeom$, $\rho_{f^{-1}\sigma'f(\alpha)}\cdot s(\alpha) = s(f^{-1}\sigma'f(\alpha))$, i.e. pour tout $\alpha\in\ArGeom$, $\rho_{\sigma(\alpha)}\cdot s(\alpha) = s(\sigma(\alpha))$,
    et comme $\rho_\alpha=\rho_\alpha^{-1}$ pour tout $\alpha\in\ArGeom$, cela équivaut à ce que pour toute arête géométrique $\alpha\in\ArGeom$, $F(s(\alpha)) = \rho_{\sigma(\alpha)}\cdot F(s(\sigma(\alpha))$.
    
    Donc en utilisant l'observation faite en début de démonstration, si $\sigma_{|\{\alpha,\alpha'\}}=\Id$, alors on doit avoir $\rho_\alpha=\rho_{\alpha'}=0$, et si $\sigma_{|\{\alpha,\alpha'\}}\neq\Id$ on doit avoir $\rho_\alpha=\rho_{\alpha'}=1$.\\
    
    Réciproquement si \ref{CaracI1} et \ref{CaracI2} sont vérifiées, alors on a bien $\pi_\mathcal{A}\circ f\sigma f^{-1} = \pi_\mathcal{A}$ et $\pi_\mathcal{C}\circ h\phi h^{-1}=\pi_\mathcal{C}$. D'après \ref{CaracI1} $f\sigma f^{-1}$ se relève donc en un automorphisme de la forme $(f\sigma f^{-1},\Psi)$ qui s'envoie sur $(\rho,\sigma)$ si \ref{CaracI3} est vérifiée.
\end{proof}

\begin{Def}
    On note $T = (C_2\wr\mathfrak{S}_{24})\times(C_3\wr\mathfrak{S}_8)\times\mathfrak{S}_{24}$. On note également $T' = (\{0\}\wr\mathfrak{S}_{24})\times(C_3\wr\mathfrak{S}_8)\times\mathfrak{S}_{24}\subset T$ et $L$ le sous-groupe des éléments $\big((0,\sigma),(\theta,\tau),\phi\big)\in T'$ tels que $\sum_{i=1}^8\theta_i=0$ et $\varepsilon(\tau)=\varepsilon(\phi)$.
\end{Def}

\begin{Rem}
    Fixer des bijections $\ArGeom\longrightarrow\llbracket1,24\rrbracket$, $\SomGeom\longrightarrow\llbracket1,8\rrbracket$ et $\CentGeom\longrightarrow\llbracket1,24\rrbracket$ donne un isomorphisme $\mathcal{T}^\wr_{geom}\longrightarrow T$ qui se restreint à $\mathcal{T}^{\prime\wr}_{geom}$ et à $\mathcal{L}^\wr$ en des isomorphismes $\mathcal{T}^{\prime\wr}_{geom}\longrightarrow T'$ et $\mathcal{L}^\wr\longrightarrow L$.
\end{Rem}

\begin{Def}
    On note $I\subset T$ le sous-groupe des éléments $((\rho,\sigma),(0,\Id),\phi)\in T$ vérifiant les trois propriétés suivantes :
    \begin{enumerate}[label=\textup{(\roman*)}]
        \item $\forall 1\leq k\leq12, \sigma(\{2k-1,2k\}) =\{2k-1,2k\}$ ;
        \item $\forall 1\leq i \leq 6,\phi(\llbracket4k-3,4k\rrbracket) = \llbracket4k-3,4k\rrbracket$ ;
        \item $\forall 1\leq k\leq12, 
        \left\{\begin{array}{c}
             \sigma_{|\{2k-1,2k\}} = \Id \Rightarrow \rho_{2k-1}=\rho_{2k}=0 \\
             \sigma_{|\{2k-1,2k\}} \neq \Id \Rightarrow \rho_{2k-1}=\rho_{2k}=1
        \end{array}\right.$
    \end{enumerate}
\end{Def}

\begin{Lem}
    On peut choisir des bijections $\ArGeom\longrightarrow\llbracket1,24\rrbracket$ et $\CentGeom\longrightarrow\llbracket1,24\rrbracket$ de façon à ce que l'isomorphisme $\mathcal{T}_{geom}^\wr\longrightarrow T$ se restreigne en un isomorphisme $\mathcal{I}^\wr\longrightarrow I$.
\end{Lem}

\begin{proof}
    Évidente.
\end{proof}

\begin{Def}
    On note $I_2 =\{((\overline{0},\overline{0}),\Id),((\overline{1},\overline{1}),(12))\}\subset C_2\wr\mathfrak{S}_2$.
\end{Def}

\begin{Cor}\label{IsomI}
    On a un isomorphisme $I\cong I_2^{12}\times \mathfrak{S}_4^6$.
\end{Cor}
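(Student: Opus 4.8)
The plan is to unwind the definition of $I$ and recognize it as a direct product indexed by the twelve edge-pairs and the six center-quadruples. First I would observe that the defining conditions on an element $((\rho,\sigma),(0,\Id),\phi)\in I$ are \emph{separately} conditions on the restriction of $\sigma$ to each block $\{2k-1,2k\}$ (condition (i), together with condition (iii) which couples $\rho_{2k-1},\rho_{2k}$ only to $\sigma_{|\{2k-1,2k\}}$) and conditions on the restriction of $\phi$ to each block $\llbracket 4k-3,4k\rrbracket$ (condition (ii)). Since the ambient group is $(C_2\wr\mathfrak{S}_{24})\times(C_3\wr\mathfrak{S}_8)\times\mathfrak{S}_{24}$ and the second coordinate is forced to be trivial, an element of $I$ is nothing but a choice, for each $k\in\{1,\dots,12\}$, of a pair $(\rho_{2k-1},\rho_{2k},\sigma_{|\{2k-1,2k\}})$ satisfying (iii), together with a choice, for each $k\in\{1,\dots,6\}$, of a permutation of $\llbracket 4k-3,4k\rrbracket$.

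The key step is then to identify the set of admissible triples $(\rho_{2k-1},\rho_{2k},\sigma_{|\{2k-1,2k\}})$ with the group $I_2\subset C_2\wr\mathfrak{S}_2$ as defined just above. Indeed condition (iii) says exactly that either $\sigma$ fixes the block pointwise and $\rho_{2k-1}=\rho_{2k}=0$, giving the element $((\overline 0,\overline 0),\Id)$, or $\sigma$ swaps $2k-1$ and $2k$ and $\rho_{2k-1}=\rho_{2k}=1$, giving $((\overline 1,\overline 1),(12))$; these are precisely the two elements of $I_2$. I would check (this is the only thing to verify, and it is a routine two-line computation in $C_2\wr\mathfrak{S}_2$) that $I_2$ is closed under the group law and inversion, so that it is genuinely a subgroup of $C_2\wr\mathfrak{S}_2$ and the bijection $I\cong I_2^{12}\times\mathfrak{S}_4^6$ is a group isomorphism for the componentwise operations; this closure is what makes condition (iii) ``multiplicative'' across a product.

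Concretely, I would write down the isomorphism explicitly: send $((\rho,\sigma),(0,\Id),\phi)\in I$ to the tuple $\big((({\rho_{2k-1}},{\rho_{2k}}),\sigma_{|\{2k-1,2k\}})_{1\le k\le 12},\ (\phi_{|\llbracket 4k-3,4k\rrbracket})_{1\le k\le 6}\big)$, where each edge-component lies in $I_2$ by condition (iii) and each center-component lies in $\mathfrak{S}(\llbracket 4k-3,4k\rrbracket)\cong\mathfrak{S}_4$ by condition (ii). Its inverse reassembles $\rho$, $\sigma$ and $\phi$ from the blocks and sets the sommet coordinate to $(0,\Id)$; conditions (i), (ii), (iii) hold by construction. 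That the map respects the group laws follows from the fact that multiplication in $T$ is componentwise on the $\mathfrak{S}_{24}$-factors and on the base/fiber data, so block-by-block, once we know $I_2$ is a subgroup.

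I do not expect any real obstacle here — the statement is essentially a bookkeeping lemma — but if there is a subtle point it is making sure that condition (iii) really does define a \emph{sub\-group} of each $C_2\wr\mathfrak{S}_2$ factor rather than merely a subset, i.e.\ checking that the product and inverse of admissible blocks are again admissible; this is exactly the verification that $I_2\le C_2\wr\mathfrak{S}_2$, and it is immediate from $((\overline 1,\overline 1),(12))^2=((\overline 1,\overline 1)(\overline 1,\overline 1),\Id)=((\overline 0,\overline 0),\Id)$ together with $I_2=\{\Id,((\overline 1,\overline 1),(12))\}$ having order $2$.
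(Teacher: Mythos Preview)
Your argument is correct and is exactly the unwinding the paper has in mind: the corollary is stated without proof there, immediately after the definitions of $I$ and $I_2$, precisely because the block decomposition you spell out is considered evident from those definitions. Your explicit check that $I_2$ is a subgroup (via $((\overline 1,\overline 1),(12))^2=((\overline 0,\overline 0),\Id)$) is the only point requiring any verification, and you handle it correctly.
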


\subsection{Calcul du double quotient : états inéquivalents du Rubik's Revenge}

\begin{Def}
    On appelle \emph{ensemble des états inéquivalents du Rubik's Revenge} le double quotient $\Ineq = \mathcal{I}\backslash\Rev_{marq}/\mathcal{L}$.
\end{Def}

\begin{Prop}
    On a une bijection $\Ineq\cong I\backslash T/L$.
\end{Prop}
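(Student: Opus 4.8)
The plan is to reduce the statement $\Ineq \cong I\backslash T/L$ to a purely algebraic identification by transporting the bitorseur structure of $\Rev_{marq}$ through the element $x_0$. First I would recall that, since $\ArGeom \cong \ArMat$ (and similarly for sommets and centres), $\Rev_{marq} = \Iso_{\fib{}}(\ArGeom,\ArMat)\times\Iso_{\fib{}}(\SomGeom,\SomMat)\times\Iso_{\fib{}}(\CentGeom,\CentMat)$ is a $(\mathcal{T}_{mat},\mathcal{T}_{geom})$-bitorseur, with $\mathcal{I}$ acting on the left (as a subgroup of $\mathcal{T}_{mat}$) and $\mathcal{L}$ acting on the right (as a subgroup of $\mathcal{T}_{geom}$). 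By Proposition \ref{IsoBitorseur}, the choice of $x_0$ gives an isomorphism of bitorseurs $(\Ad_{x_0},\psi'_{x_0},\Id)$ between $\Rev_{marq}$ and the trivial $(\mathcal{T}_{geom},\mathcal{T}_{geom})$-bitorseur $\mathcal{T}_{geom}$; under this map the right $\mathcal{L}$-action becomes right multiplication by $\mathcal{L}$, and the left $\mathcal{I}$-action becomes left multiplication by $\Ad_{x_0}(\mathcal{I})$.

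Next I would identify these two subgroups concretely inside $\mathcal{T}_{geom}^\wr$. Composing with the marquage isomorphism $\mathcal{T}_{geom}\to\mathcal{T}_{geom}^\wr$ (induced by $s$ and $\varsigma$), the group $\mathcal{L}$ is sent to $\mathcal{L}^\wr$ by definition, and $\mathcal{I}$ is sent to $\mathcal{I}^\wr$ by the very definition of $\mathcal{I}^\wr$ as the image of $\mathcal{I}$ under $(\phi_s\times\phi'_\varsigma\times\Id)\circ\Ad_{x_0}$. Thus the bitorseur isomorphism descends to a bijection of double cosets
\begin{align*}
    \mathcal{I}\backslash\Rev_{marq}/\mathcal{L} \;\cong\; \mathcal{I}^\wr\backslash\mathcal{T}_{geom}^\wr/\mathcal{L}^\wr.
\end{align*}
Here I should be slightly careful: the bitorseur isomorphism is equivariant for the left $\mathcal{T}_{mat}$-action only after identifying $\mathcal{T}_{mat}$ with $\mathcal{T}_{geom}$ via $\Ad_{x_0}$, so I would spell out that a left $\mathcal{I}$-orbit of $x\in\Rev_{marq}$ maps to the left $\Ad_{x_0}(\mathcal{I})$-orbit of $\psi'_{x_0}(x)$, and that the right actions match up directly; the commutation of the two actions (part of the bitorseur axioms) guarantees the double quotient is well-defined on both sides.

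Finally I would apply the last two displayed remarks before the statement: fixing bijections $\ArGeom\to\llbracket1,24\rrbracket$, $\SomGeom\to\llbracket1,8\rrbracket$, $\CentGeom\to\llbracket1,24\rrbracket$ (chosen, per the preceding lemma, so that $\mathcal{I}^\wr$ goes to $I$) yields an isomorphism $\mathcal{T}_{geom}^\wr\to T$ carrying $\mathcal{L}^\wr$ to $L$ and $\mathcal{I}^\wr$ to $I$. Transporting the double quotient through this group isomorphism gives $\mathcal{I}^\wr\backslash\mathcal{T}_{geom}^\wr/\mathcal{L}^\wr \cong I\backslash T/L$, and composing with the bijection above finishes the proof. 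I expect the only real subtlety to be bookkeeping the equivariance of the $x_0$-trivialization correctly — in particular making sure the left action being transported is genuinely $\Ad_{x_0}(\mathcal{I})$ and that this is precisely what $\mathcal{I}^\wr$ unwinds to — while everything after that (restricting group isomorphisms to subgroups and passing to double quotients) is routine.
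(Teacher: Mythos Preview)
Your proposal is correct and follows exactly the approach the paper indicates: the paper's own proof is the single sentence ``On compose tous les isomorphismes de bitorseurs et de groupes obtenus précédemment'', and what you have written is precisely a careful unpacking of that composition (trivialize $\Rev_{marq}$ via $x_0$ using Proposition~\ref{IsoBitorseur}, transport $\mathcal{I}$ to $\mathcal{I}^\wr$ via $\Ad_{x_0}$ and the marquage isomorphism, then pass to $T$ via the numbering bijections). The bookkeeping point you flag about the left action becoming $\Ad_{x_0}(\mathcal{I})$ is indeed the only place requiring care, and it is exactly built into the paper's definition of $\mathcal{I}^\wr$.
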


\begin{proof}
    On compose tous les isomorphismes de bitorseurs et de groupes obtenus précédemment.
\end{proof}

\begin{Lem}\label{LemIneq}
    On a $\#I\backslash T/T' = 3^{12}$.
\end{Lem}

\begin{proof}
    $\centerdot$ On identifie $C_2^{24}$ à l'ensemble des fonctions $\llbracket1,24\rrbracket\longrightarrow C_2$. $\mathfrak{S}_{24}$ agit à gauche sur $C_2^{24}$ par $(\sigma,c)\longmapsto \sigma\cdot c = c\circ\sigma^{-1}$ (c'est d'ailleurs cette action qu'on utilise dans le produit en couronne $C_2\wr\mathfrak{S}_{24}$). Alors $C_2\wr\mathfrak{S}_{24}$ agit à gauche sur $C_2^{24}$ par $((\rho,\sigma),c)\longmapsto (\rho,\sigma)\cdot c = \rho+\sigma\cdot c$.
    
    En effet on a clairement $(0,\Id)\cdot c=c$ pour tout $c\in C_2^{24}$, et si $(\rho,\sigma),(\theta,\tau)\in C_2\wr \mathfrak{S}_{24}$ :
    \begin{align*}
        \big((\rho,\sigma)(\theta,\tau)\big)\cdot c & = (\rho+\sigma\cdot\theta,\sigma\tau)\cdot c = \rho+\sigma\cdot\theta+(\sigma\tau)\cdot c = \rho +\sigma\cdot(\theta+\tau\cdot c) \\
        & = (\rho,\sigma)\cdot((\theta,\tau)\cdot c)
    \end{align*}
    On fait donc agir $I$ sur $C_2^{24}$ par $\big(((\rho,\sigma),(0,\Id),\phi),c\big)\longmapsto((\rho,\sigma),(0,\Id),\phi)\cdot c = (\rho,\sigma)\cdot c$.\\
    $\centerdot$ Soit $f : C_2^{24} \longrightarrow T/T', c \longmapsto ((c,\Id),(0,\Id),\Id)T'$. $f$ est une bijection, montrons que c'est un morphisme de $I$-ensembles. Soient $c\in C_2^{24}$ et $i=((\rho,\sigma),(0,\Id),\phi)\in I$. On a :
    \begin{align*}
        i\cdot f(c) & = ((\rho,\sigma),(0,\Id),\phi)((c,\Id),(0,\Id),\Id)T' = ((\rho+\sigma\cdot c,\sigma),(0,\Id),\phi)T'\\
        & = ((\rho+\sigma\cdot c,\Id),(0,\Id),\Id)((0,\sigma),(0,\Id),\phi)T' = ((i\cdot c,\Id),(0,\Id),\Id)T' \\
        & = f(i\cdot c)
    \end{align*}
    $f$ est donc un isomorphisme de $I$-ensembles. On a donc $\#I\backslash T/T' = \#I\backslash C_2^{24}$.\\
    $\centerdot$ Calculons $\#I\backslash C_2^{24}$. L'action de $I$ sur $C_2^{24}$ peut être considérée comme la puissance 12-ème de l'action de l'action de $I_2$ sur $C_2^2$, cette dernière action étant donnée par $((\rho,\sigma),c) \longmapsto (\rho,\sigma)\cdot c = \rho+\sigma\cdot c$.
    
    $I_2$ et $C_2^2$ étant de petits cardinaux, on peut calculer explicitement les orbites :
    \begin{align*}
        \begin{array}{cc}
            ((\overline{1},\overline{1}),(12))\cdot(\overline{0},\overline{0}) = (\overline{1},\overline{1}), & ((\overline{1},\overline{1}),(12))\cdot(\overline{1},\overline{1}) = (\overline{0},\overline{0}), \\ ((\overline{1},\overline{1}),(12))\cdot(\overline{0},\overline{1}) = (\overline{0},\overline{1}), & ((\overline{1},\overline{1}),(12))\cdot(\overline{1},\overline{0}) = (\overline{1},\overline{0}) 
        \end{array}
    \end{align*}
    L'action de $I_2$ sur $C_2^2$ est donc une action à 3 orbites, dont un système de représentants est donné par $(\overline{0},\overline{0}),(\overline{0},\overline{1})\text{ et }(\overline{1},\overline{0})$.
    
    Il s'ensuit que l'action de $I$ sur $C_2^{24}$ est une action à $3^{12}$ orbites, et donc l'action de $I$ sur $T/T'$ possède également $3^{12}$ orbites.
\end{proof}

\begin{Theo}\label{QuA}
    On a $\#I\backslash T/L=3^{13}$.
\end{Theo}

\begin{proof}
    $\centerdot$ Commençons par montrer qu'il existe une bijection $T/L\longrightarrow C_2^{24}\times C_3\times \{-1,1\}$. Notons $G=C_2\wr\mathfrak{S}_{24}$ et $H=(C_3\wr\mathfrak{S}_8)\times\mathfrak{S}_{24}$. Alors $T = G\times H$, et en notant $G'=\mathfrak{S}_{24}\subset G$ et $H=\{((\theta,\tau),\phi)\in H|\sum_i\theta_i=0,\varepsilon(\tau)=\varepsilon(\phi)\}\subset H$, on a $L = G'\times H'$.
    
    Or $H/H'\cong C_3\times\{-1,1\}$ car $H'$ est le noyau d'un morphisme surjectif $H \longrightarrow C_3\times \{-1,1\}$. Donc finalement on a une bijection canonique $T/L\cong C_2^{24}\times C_3\times\{-1,1\}$. De plus $I=I_G\times I_H$ où $I_G$ est un sous-groupe de $G$ isomorphe à $I_2^{12}$ et $I_H$ est un sous-groupe de $H$ isomorphe à $\{(0,\Id)\}\times \mathfrak{S}_4^6$. Alors comme $T/L=G/G'\times H/H'$, on a $I\backslash T/L=(I_G\backslash G/G')\times(I_H\backslash H/H')$.
    
    D'après le lemme \ref{IsomI}, $\#I_G\backslash G/G'=3^{12}$. Reste à calculer $\#I_H\backslash H/H'$. On a $I_H\backslash H/H'\cong I_H\backslash(H/H')\cong I_H\backslash(C_3\times\{-1,1\})$. Or l'action de $I_H$ sur $C_3\times\{-1,1\}$ est donnée par $(\phi,(\theta,\epsilon))\longmapsto\phi\cdot(\theta,\epsilon)=(\theta,\varepsilon(\phi)\epsilon)$. C'est une action à trois orbites dont un système de représentants est donné par $(\overline{-1},1),(\overline{0},1) \text{ et } (\overline{1},1)$.
    
    Donc finalement $\#I\backslash T/L = 3^{13}$.
\end{proof}

\begin{Rem}
    On peut déterminer explicitement ces $3^{13}$ représentants, que l'on peut distinguer en fonction des orientations des sommets et des arêtes. En effet il existe essentiellement 3 configurations des sommets :\\
    \begin{figure}[H]
        \centering
        \includegraphics[width = 15cm]{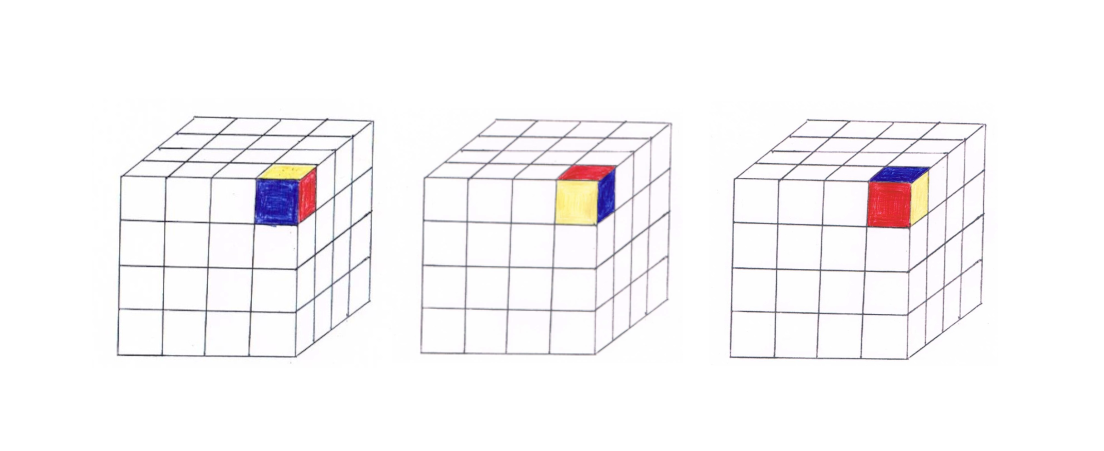}
        \caption{Trois configurations des sommets dans des états inéquivalents différents (aucune modification n'est appliquée sur les autres sommets)}\label{FigConfigSommet}
    \end{figure}
    De plus, il existe essentiellement trois configurations de chaque paire d'arêtes indiscernables :\\
    \begin{figure}[H]
        \centering
        \includegraphics[width = 15cm]{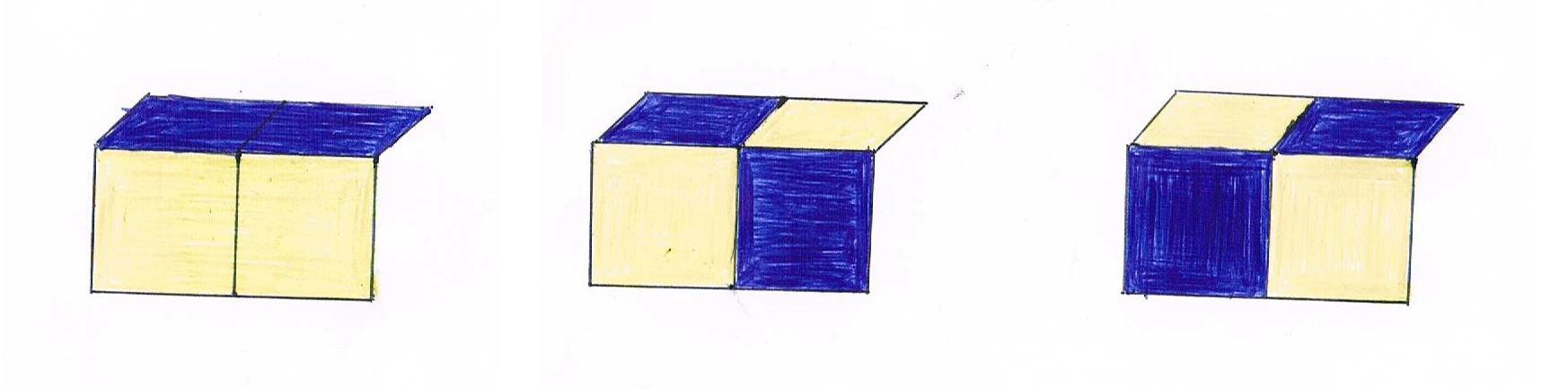}
        \caption{Trois configurations d'arêtes voisines indiscernables dans des états inéquivalents différents (de droite à gauche on trouve les configurations $(\overline{0},\overline{0})$,$(\overline{0},\overline{1})$ et $(\overline{1},\overline{0})$ dans $C_2^2$)}\label{FigConfigArete}
    \end{figure}
    Au total on retrouve des représentants des $3^{13}$ états inéquivalents du Rubik's Revenge.
\end{Rem}

\subsection{Probabilité de pouvoir résoudre le Rubik's Revenge en l'ayant monté au hasard}

Munissons $\Rev_{marq}$ de la probabilité uniforme. La question \textbf{(B)} revient alors à calculer la probabilité de l'événement $g\in \mathcal{I}\cdot x_0\cdot\mathcal{L}$. Par définition de la probabilité uniforme, cette probabilité vaut $\frac{\#\mathcal{I}\cdot x_0\cdot\mathcal{L}}{\#\Rev_{marq}}$.

\begin{Lem}\label{LemProb1}
    On a une bijection $\mathcal{I}\cdot x_0\cdot\mathcal{L}\cong IL$.
\end{Lem}

\begin{proof}
    $x_0$ est envoyé sur le neutre de $T$ par l'isomorphisme de bitorseurs entre $\Rev_{marq}$ et $T$.
\end{proof}

\begin{Lem}\label{LemProb2}
    On a $I\cap L = \{((0,\Id),(0,\Id),\phi)|\phi\in\mathfrak{S}_4^6\cap\mathfrak{A}_{24}\}$.
\end{Lem}

\begin{proof}
    D'une part si $\phi\in\mathfrak{S}_4^6\cap\mathfrak{A}_{24}$, alors $((0,\Id),(0,\Id),\phi)\in I\cap L$.
    
    D'autre part si $((\rho,\sigma),(\theta,\tau),\phi)\in I\cap L$, alors on a $(\theta,\tau)=(0,\Id)$  et donc $\phi\in\mathfrak{S}_4^6\cap\mathfrak{A}_{24}$. Puis $\rho=0$ et donc par définition de $I$, $\sigma=\Id$. D'où l'autre inclusion et l'égalité.
\end{proof}

\begin{Theo}
    La probabilité de pouvoir résoudre un Rubik's Revenge monté au hasard est $\frac{1}{3.2^{12}}$.
\end{Theo}

\begin{proof}
    D'après le lemme \ref{LemProb1}, cette probabilité est donnée par $\frac{\#IL}{\#T}$. Or on a $\#IL=\frac{\#I\#L}{\#I\cap L}$, et d'après le lemme \ref{LemProb2}, $\# I\cap L = \frac{24^6}{2}$. Finalement, comme $L$ est un sous-groupe de $T$ d'indice $6.2^{24}$, on a :
    \begin{align*}
        \frac{\#IL}{\#T} = \frac{\#I\#L}{\#I\cap L.6.2^{24}\#L} = \frac{2^{12}.24^6.2}{24^6.6.2^{24}} = \frac{1}{3.2^{12}}
    \end{align*}
\end{proof}

\section{Prise en compte des contraintes mécaniques}\label{VI}

\subsection{Formalisation des contraintes mécaniques}

Les calculs réalisés dans la section précédente peuvent ne pas paraître entièrement satisfaisants. En effet ces calculs ne prennent pas en compte la mécanique interne du casse tête, en particulier la manière dont il est possible de remonter les pièces des arêtes.

Contrairement aux pièces des centres qui sont toutes identiques, les pièces d'arêtes ne peuvent pas être remontées dans n'importe quel sens après avoir été démontées. Démonter puis remonter le casse-tête préserve en réalité le marquage que l'ont peut observer en figure \ref{FigMarquageAretes}.

Pour tenir compte de ces contraintes mécaniques, on se restreint à l'action à droite sur $\Rev_{marq}$ du sous-groupe de $\mathcal{T}_{geom}$ de toutes les transformations préservant ce marquage, i.e. au sous-groupe $\mathcal{T}'_{geom}$. D'après la proposition \ref{SousBitor}, on définit un nouveau bitorseur :

\begin{Def}
    On appelle ensemble des \emph{états mécaniquement admissibles}, et on note $\Rev_{meca}$ l'ensemble $\Rev_{meca} = x_0\cdot\mathcal{T}'_{geom} \subset \Rev_{marq}$.
\end{Def}

$\Rev_{meca}$ est alors un $(\Ad'_{x_0}(\mathcal{T}'_{geom}),\mathcal{T}'_{geom})$-bitorseur sur lequel viennent naturellement agir deux groupes : à droite les transformations licites $\mathcal{L}$, et à gauche les \emph{transformations indiscernables mécaniquement admissibles} $\mathcal{I}\cap\Ad'_{x_0}(\mathcal{T}'_{geom})$.

\begin{Def}
    Dans la suite on notera $\mathcal{I}'=\mathcal{I}\cap\Ad'_{x_0}(\mathcal{T}'_{geom})$.
\end{Def}

La question \textbf{(A')} se reformule ainsi : trouver le cardinal de $\mathcal{I}'\backslash\Rev_{meca}/\mathcal{L}$. La question \textbf{(B')} revient à calculer $\frac{\#\mathcal{I}'\cdot x_0\cdot\mathcal{L}}{\#\Rev_{meca}}$.

\subsection{Calcul du nombre d'états mécaniquement inéquivalents}

\begin{Def}
    On appelle ensemble des \emph{états mécaniquement inéquivalents} le double quotient $\Ineq_{meca}=\mathcal{I}'\backslash\Rev_{meca}/\mathcal{L}$.
\end{Def}

\begin{Lem}
    On a  une bijection $\Ineq_{meca} \cong I\cap T'\backslash T'/L$.
\end{Lem}

\begin{proof}
    Évidente.
\end{proof}

\begin{Theo}
    On a $\#\Ineq_{meca} = 3$.
\end{Theo}

\begin{proof}
    On a $\#\Ineq_{meca} = \#I\cap T'\backslash T'/L = \#I\cap T'\backslash (C_3\times\{-1,1\})$, et de même que dans la démontration du théorème \ref{QuA}, l'action de $I\cap T'$ possède trois orbites. 
\end{proof}

\begin{Rem}
    Un système de représentants de ces trois états mécaniquement inéquivalents est donné par les configurations des sommets visibles en figure \ref{FigConfigSommet}.
\end{Rem}

\subsection{Calcul de la probabilité en prenant en compte les contraintes mécaniques}

\begin{Lem}
    On a une bijection $\mathcal{I}\cdot x_0\cdot\mathcal{L} \cong (I\cap T')L$.
\end{Lem}

\begin{proof}
    Évidente.
\end{proof}

\begin{Theo}
    Tenant compte des contraintes mécaniques du Rubik's Revenge, la probabilité de pouvoir résoudre le casse-tête monté au hasard est $\frac{1}{3}$.
\end{Theo}

\begin{proof}
    On a :
    \begin{align*}
        \frac{\#\mathcal{I}'\cdot x_0\cdot\mathcal{L}}{\#\Rev_{meca}} = \frac{\#(I\cap T')L}{\#T'} = \frac{\#(I\cap T')\#L}{\#I\cap T'\cap L\#T'} = \frac{\#(I\cap T')\#L}{\#I\cap L\#T'} = \frac{2.24^6\#L}{24^6.6.\#L} = \frac{1}{3}
    \end{align*}
\end{proof}

\begin{Rem}
    En prenant en compte les contraintes mécaniques du casse-tête, on pouvait s'attendre à ce que la probabilité de pouvoir le résoudre en l'ayant monté au hasard soit $\frac{1}{\#\Ineq_{meca}}$.
    
    En effet on n'avait pas de résultat analogue dans le cas sans contrainte car toutes les classes du double quotient $\mathcal{I}\backslash\Rev_{marq}/\mathcal{L}$ n'avaient pas le même cardinal ; en réalité le cardinal d'une telle double classe est uniquement déterminé par la configuration des arêtes d'un état de cette classe. Si on prend en compte les contraintes mécaniques, toutes les doubles classes de $\mathcal{I}'\backslash\Rev_{meca}/\mathcal{L}$ ont un état ayant la même configuration des arêtes que l'état initial $x_0$. Par conséquent toutes ces doubles classes ont le même cardinal, et donc :
    \begin{align*}
        \#\Rev_{meca} = \#(\mathcal{I}'\backslash\Rev_{meca}/\mathcal{
        L}).\#(\mathcal{I}'\cdot x_0\cdot\mathcal{L}) = 3\#\mathcal{I}'\cdot x_0\cdot\mathcal{L},
    \end{align*}
    d'où $\frac{\#\mathcal{I}'\cdot x_0\cdot\mathcal{L}}{\#\Rev_{meca}} = \frac{\#\mathcal{I}'\cdot x_0\cdot\mathcal{L}}{3\#\mathcal{I}'\cdot x_0\cdot\mathcal{L}}=\frac{1}{3}$.
\end{Rem}

\section*{Remerciements}

Je tiens à remercier Benjamin Enriquez pour avoir encadré le mémoire à l'origine de ce papier et pour sa disponibilité, son aide et ses conseils lors de la rédaction de ce travail.

\end{document}